\newtheorem{thm}{Theorem}
\newtheorem{lem}[thm]{Lemma}
\newtheorem{prop}[thm]{Proposition}
\newtheorem{assert}[thm]{Assertion}
\newtheorem{remarks}[thm]{Remark}
\newtheorem{definition}[thm]{Definition}
\newtheorem{exl}[thm]{Example}
\numberwithin{thm}{section}
\newcommand{\adj}{\leftrightarrow}
\newcommand{\adjeq}{\leftrightarroweq}
\DeclareMathOperator{\id}{id}
\DeclareMathOperator{\Fix}{Fix}
\def\Z{{\mathbb Z}}
\def\N{{\mathbb N}}
\def\R{{\mathbb R}}
\begin{document}
\title{Remarks on Fixed Point Assertions in Digital Topology, 9}
\author{Laurence Boxer
\thanks{Department of Computer and Information Sciences, Niagara University, NY 14109, USA
and  \newline
Department of Computer Science and Engineering, State University of New York at Buffalo \newline
email: boxer@niagara.edu
\newline
ORCID: 0000-0001-7905-9643
}
}

\date{ }
\maketitle

\begin{abstract}
We continue a discussion of
published assertions that are incorrect, incorrectly
proven, or trivial, in the theory
of fixed points in digital topology.

MSC: 54H25

Key words and phrases: digital topology, digital image,
fixed point, digital metric space
\end{abstract}

\section{Introduction}
Published assertions about fixed points in digital topology include
some that are beautiful and many that are incorrect, incorrectly
proven, or trivial. This paper continues the work
of~\cite{BxSt19, Bx19, Bx19-3, Bx20, Bx22, BxBad6, BxBad7, BxBad8}
in discussing flaws in papers that have come to our
attention since acceptance for publication of~\cite{BxBad8}.

We quote~\cite{BxBad8}:
\begin{quote}
... the notion of
a ``digital metric space" has led many authors to attempt,
in most cases either erroneously or trivially, to modify fixed
point results for Euclidean spaces to digital images. 
This notion contains roots of all the
flawed papers studied in the current paper.
See~\cite{Bx20} for discussion of why ``digital metric space"
does not seem a worthy topic of further research.
\end{quote}

\section{Preliminaries}
Much of the material in this section is quoted or
paraphrased from~\cite{Bx20}.

We use $\N$ to represent the natural numbers,
$\N^* = \N \cup \{0\}$, and
$\Z$ to represent the integers. 
The literature uses both $|X|$
and $\#X$ for the cardinality of~$X$. 

A {\em digital image} is a pair $(X,\kappa)$, where $X \subset \Z^n$ 
for some positive integer $n$, and $\kappa$ is an adjacency relation on $X$. 
Thus, a digital image is a graph.
In order to model the ``real world," we usually take $X$ to be finite,
although occasionally we consider
infinite digital images, e.g., for digital analogs of
covering spaces. The points of $X$ may be 
thought of as the ``black points" or foreground of a 
binary, monochrome ``digital picture," and the 
points of $\Z^n \setminus X$ as the ``white points"
or background of the digital picture.

\subsection{Adjacencies, 
continuity, fixed point}

In a digital image $(X,\kappa)$, if
$x,y \in X$, we use the notation
$x \adj_{\kappa}y$ to
mean $x$ and $y$ are $\kappa$-adjacent; we may write
$x \adj y$ when $\kappa$ can be understood. 
We write $x \adjeq_{\kappa}y$, or $x \adjeq y$
when $\kappa$ can be understood, to
mean 
$x \adj_{\kappa}y$ or $x=y$.

The most commonly used adjacencies in the study of digital images 
are the $c_u$ adjacencies. These are defined as follows.
\begin{definition}
\label{cu-adj-Def}
Let $X \subset \Z^n$. Let $u \in \Z$, $1 \le u \le n$. Let 
$x=(x_1, \ldots, x_n),~y=(y_1,\ldots,y_n) \in X$. Then $x \adj_{c_u} y$ if 
\begin{itemize}
    \item $x \neq y$,
    \item for at most $u$ distinct indices~$i$,
    $|x_i - y_i| = 1$, and
    \item for all indices $j$ such that $|x_j - y_j| \neq 1$ we have $x_j=y_j$.
\end{itemize}
\end{definition}

\begin{definition}
\label{path}
{\rm (See \cite{Khalimsky})} 
    Let $(X,\kappa)$ be a digital image. Let
    $x,y \in X$. Suppose there is a set
    $P = \{x_i\}_{i=0}^n \subset X$ such that
$x=x_0$, $x_i \adj_{\kappa} x_{i+1}$ for
$0 \le i < n$, and $x_n=y$. Then $P$ is a
{\em $\kappa$-path} (or just a {\em path}
when $\kappa$ is understood) in $X$ from $x$ to $y$,
and $n$ is the {\em length} of this path.
\end{definition}

\begin{definition}
{\rm \cite{Rosenfeld}}
A digital image $(X,\kappa)$ is
{\em $\kappa$-connected}, or just {\em connected} when
$\kappa$ is understood, if given $x,y \in X$ there
is a $\kappa$-path in $X$ from $x$ to $y$. The {\rm $\kappa$-component of~$x$ in~$X$} is the
maximal $\kappa$-connected subset
of~$X$ containing~$x$.
\end{definition}

\begin{definition}
{\rm \cite{Rosenfeld, Bx99}}
Let $(X,\kappa)$ and $(Y,\lambda)$ be digital
images. A function $f: X \to Y$ is 
{\em $(\kappa,\lambda)$-continuous}, or
{\em $\kappa$-continuous} if $(X,\kappa)=(Y,\lambda)$, or
{\em digitally continuous} when $\kappa$ and
$\lambda$ are understood, if for every
$\kappa$-connected subset $X'$ of $X$,
$f(X')$ is a $\lambda$-connected subset of $Y$.
\end{definition}

\begin{thm}
{\rm \cite{Bx99}}
A function $f: X \to Y$ between digital images
$(X,\kappa)$ and $(Y,\lambda)$ is
$(\kappa,\lambda)$-continuous if and only if for
every $x,y \in X$, if $x \adj_{\kappa} y$ then
$f(x) \adjeq_{\lambda} f(y)$.
\end{thm}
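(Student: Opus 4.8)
The plan is to prove the two implications separately, in each case reducing the connectivity statements to statements about the individual edges of paths. Throughout I will use the fact (implicit in Definition~\ref{cu-adj-Def} and in the notion of adjacency on a graph) that $x \adj_{\kappa} y$ forces $x \neq y$, and the fact that any two-point sequence $x, y$ with $x \adj_{\kappa} y$ is a $\kappa$-path.

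For the forward direction I would assume $f$ is $(\kappa,\lambda)$-continuous and take $x,y \in X$ with $x \adj_{\kappa} y$. The key observation is that $\{x,y\}$ is itself a $\kappa$-connected subset of $X$, since the sequence $x, y$ is a $\kappa$-path in $X$. Continuity then forces $f(\{x,y\}) = \{f(x), f(y)\}$ to be a $\lambda$-connected subset of $Y$. If $f(x) = f(y)$, this set is a singleton and $f(x) \adjeq_{\lambda} f(y)$ holds by the definition of $\adjeq_{\lambda}$. If instead $f(x) \neq f(y)$, there is a $\lambda$-path inside $\{f(x), f(y)\}$ from $f(x)$ to $f(y)$; its first edge joins $f(x)$ to a point of the set distinct from $f(x)$, which must be $f(y)$, so $f(x) \adj_{\lambda} f(y)$. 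In either case $f(x) \adjeq_{\lambda} f(y)$.

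For the converse I would assume the edgewise condition and let $X'$ be an arbitrary $\kappa$-connected subset of $X$, the goal being that $f(X')$ is $\lambda$-connected. The degenerate cases in which $X'$ is empty or a single point are immediate, so suppose $f(a), f(b) \in f(X')$ with $a,b \in X'$. Connectivity of $X'$ gives a $\kappa$-path $a = x_0, x_1, \ldots, x_n = b$ lying in $X'$. Applying the hypothesis to each consecutive pair yields $f(x_i) \adjeq_{\lambda} f(x_{i+1})$ for $0 \le i < n$, so the sequence $f(x_0), f(x_1), \ldots, f(x_n)$ lies in $f(X')$ and becomes a genuine $\lambda$-path from $f(a)$ to $f(b)$ after deleting any consecutive repetitions. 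Hence $f(X')$ is $\lambda$-connected, so $f$ is $(\kappa,\lambda)$-continuous.

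The argument is essentially bookkeeping, and I do not anticipate a real obstacle. The only points requiring a little care are the possibility $f(x) = f(y)$ in the forward direction, which is exactly why the weak adjacency $\adjeq_{\lambda}$ rather than $\adj_{\lambda}$ appears in the statement, and, in the converse, the fact that the image under $f$ of a path need not be a path in the strict sense because consecutive vertices may coincide, which is handled by passing to the reduced sequence.
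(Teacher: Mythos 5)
Your proposal is correct, but note that the paper itself offers no proof of this statement: it is quoted as a known theorem with a citation to~\cite{Bx99}, so there is no in-paper argument to compare against. Your two-direction argument --- using the two-point $\kappa$-connected set $\{x,y\}$ to extract the edgewise condition from continuity, and, conversely, pushing a $\kappa$-path through $f$ and deleting consecutive repetitions to get a $\lambda$-path --- is the standard proof of this characterization and is essentially the argument given in the cited source, with the corner cases ($f(x)=f(y)$, and repeated vertices in the image sequence) handled correctly.
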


We use $\id_X$ to denote the identity function on $X$, 
and $C(X,\kappa)$ for the set of functions 
$f: X \to X$ that are $\kappa$-continuous.

A {\em fixed point} of a function $f: X \to X$ 
is a point $x \in X$ such that $f(x) = x$. We denote by
$\Fix(f)$ the set of fixed points of $f: X \to X$.

As a convenience, if $x$ is a point in the
domain of a function $f$, we will often
abbreviate ``$f(x)$" as ``$fx$".
Also, if $f: X \to X$, we use
``$f^n$" for the $n$-fold composition
\[ f^n = \overbrace{f \circ \ldots \circ f}^{n}
\]

\subsection{Digital metric spaces}
\label{DigMetSp}
A {\em digital metric space}~\cite{EgeKaraca15} is a triple
$(X,d,\kappa)$, where $(X,\kappa)$ is a digital image and $d$ is a metric on $X$. The
metric is usually taken to be the Euclidean
metric or some other $\ell_p$ metric; 
alternately, $d$ might be taken to be the
shortest path metric. These are defined
as follows.
\begin{itemize}
    \item Given 
          $x = (x_1, \ldots, x_n) \in \Z^n$,
          $y = (y_1, \ldots, y_n) \in \Z^n$,
          $p > 0$, $d$ is the $\ell_p$ metric
          if \[ d(x,y) =
          \left ( \sum_{i=1}^n
          \mid x_i - y_i \mid ^ p
          \right ) ^ {1/p}. \]
          Note the special cases: if $p=1$ we
          have the {\em Manhattan metric}; if
          $p=2$ we have the 
          {\em Euclidean metric}.
    \item \cite{ChartTian} If $(X,\kappa)$ is a 
          connected digital image, 
          $d$ is the {\em shortest path metric}
          if for $x,y \in X$, $d(x,y)$ is the 
          length of a shortest
          $\kappa$-path in $X$ from $x$ to $y$.
\end{itemize}

\begin{definition}
    \label{nbd}
    Given $x \in X$, where
    $(X,\kappa)$ is a digital
    image, the {\rm neighborhood
    of~$x$ of radius} $r \in \N^*$
    is
    \[ N_{\kappa}(x,r) = \{ y \in X \mid d(x,y) \le r \}
    \]
    where $d$ is the shortest path
    metric for the $\kappa$-component of~$x$ in~$X$.
\end{definition}

We say a metric space $(X,d)$ is {\em uniformly discrete}
if there exists $\varepsilon > 0$ such that
$x,y \in X$ and $d(x,y) < \varepsilon$ implies $x=y$.

\begin{remarks}
\label{unifDiscrete}
If $X$ is finite or  
\begin{itemize}
\item {\rm \cite{Bx19-3}}
$d$ is an $\ell_p$ metric, or
\item $(X,\kappa)$ is connected and $d$ is 
the shortest path metric,
\end{itemize}
then $(X,d)$ is uniformly discrete.

For an example of a digital metric space
that is not uniformly discrete, see
Example~2.10 of~{\rm \cite{Bx20}}.
\end{remarks}

We say a sequence $\{x_n\}_{n=0}^{\infty}$ is 
{\em eventually constant} if for some $m>0$, 
$n>m$ implies $x_n=x_m$.
The notions of convergent sequence and complete digital metric space are often trivial, 
e.g., if the digital image is uniformly 
discrete, as noted in the following, a minor 
generalization of results 
of~\cite{HanBan,BxSt19}.

\begin{prop}
\label{eventuallyConst}
{\rm \cite{Bx20}}
Let $(X,d)$ be a metric space. 
If $(X,d)$ is uniformly discrete,
then any Cauchy sequence in $X$
is eventually constant, and $(X,d)$ is a complete metric space.
\end{prop}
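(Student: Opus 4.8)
The plan is to apply the definitions of \emph{uniformly discrete} and \emph{Cauchy} directly, with no machinery beyond them. First I would fix an $\varepsilon > 0$ witnessing uniform discreteness of $(X,d)$, so that $x,y \in X$ with $d(x,y) < \varepsilon$ forces $x = y$. Given a Cauchy sequence $\{x_n\}_{n=0}^{\infty}$ in $X$, the Cauchy condition applied to this particular $\varepsilon$ yields an index $m$ such that $d(x_j,x_k) < \varepsilon$ whenever $j,k > m$; by uniform discreteness this forces $x_j = x_k$ for all $j,k > m$.

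Next I would put this into the precise form of ``eventually constant'' used in the paper. Setting $M = m+1$, any $n > M$ also satisfies $n > m$ and $M > m$, hence $x_n = x_M$; thus the sequence is eventually constant, with eventual value $x_M \in X$.

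For the completeness assertion, I would observe that an eventually constant sequence converges to its eventual value, and that value lies in $X$. Since the preceding paragraphs show every Cauchy sequence in $X$ is eventually constant, every Cauchy sequence in $X$ converges in $X$; that is, $(X,d)$ is complete.

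The whole argument is essentially bookkeeping with quantifiers, so I expect no real obstacle. The only point needing mild care is reconciling the paper's definition of ``eventually constant'' (which requires $x_n = x_m$ for all $n > m$) with the symmetric equality $x_j = x_k$ that the Cauchy condition plus uniform discreteness delivers; this is resolved by the harmless index shift to $M = m+1$.
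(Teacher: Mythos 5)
Your argument is correct; the paper itself gives no proof here, simply citing~\cite{Bx20}, and your direct quantifier argument (apply the Cauchy condition to the $\varepsilon$ witnessing uniform discreteness, shift the index to match the paper's definition of eventually constant, then note an eventually constant sequence converges in $X$) is the standard proof one expects behind that citation. No gaps.
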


The {\em diameter} of a bounded
set $X \subset (\Z^n,d)$ is
\[ diam(X) = \max \{ d(x,y) \mid
   x,y \in X \}.
\]

\section{\cite{Han19}'s contraction maps}
There are in S.E. Han's paper~\cite{Han19} 
unnecessarily complex notations, unnecessary material, poor exposition,
inappropriate citations, 
undefined notions, and 
incorrect deductions;
indeed, most of what is introduced 
in~\cite{Han19} is incorrectly
stated, false or incorrectly ``proven",
or trivial. We discuss
shortcomings of~\cite{Han19}.

\subsection{\cite{Han19}'s fixed point property (FPP)}
A set $X$ with a notion of 
continuity for its self-maps has
the {\em fixed point property}
(FPP) if every continuous
$f: X \to X$ has a fixed point.

On page 237 of~\cite{Han19}, we have
``... it turns out that a (finite or infinite) digital
plane or a (finite or infinite) digital line $(X,k)$ 
in $\Z^2$ with $\#X \ge 2$ was proved not to have the FPP...." Aside from the question of what a finite
digital plane or a finite digital line might be,
a much stronger result is known: 
\begin{thm}
\label{FPPcharacterize}
{\rm \cite{BEKLL}}
A digital image
$(X,\kappa)$ has the FPP if and only if 
$\#X = 1$.
\end{thm}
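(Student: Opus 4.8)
The plan is to prove both directions of the biconditional. The ``if'' direction is immediate: if $\#X = 1$, say $X = \{x_0\}$, then the only self-map of $X$ is the constant map $f(x_0) = x_0$, which is trivially $\kappa$-continuous and has $x_0$ as a fixed point; hence $(X,\kappa)$ has the FPP. The substance is the ``only if'' direction: assuming $\#X \ge 2$, I must exhibit a $\kappa$-continuous $f : X \to X$ with no fixed point.

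For the ``only if'' direction, first reduce to the case where $(X,\kappa)$ is $\kappa$-connected: if $X$ has at least two $\kappa$-components, pick two distinct components $C_1, C_2$, choose points $a_i \in C_i$, and define $f$ to send all of $C_1 \cup C_2$'s complement and $C_1$ to $a_2$ and $C_2$ to $a_1$ — more simply, define $f$ to be constant equal to a point in a component different from the one containing a given point, which is continuous (constant on each component is continuous) and fixed-point-free. So assume $(X,\kappa)$ is connected with $\#X \ge 2$. Now I would use the shortest path metric $d$ on $X$ (well-defined by connectedness) and the neighborhood notion $N_\kappa(x,1) = \{y \in X \mid x \adjeq_\kappa y\}$. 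The idea is to build $f$ that moves every point to an adjacent point ``away from a chosen endpoint'' so that continuity (moving each point at most distance $1$, hence to an adjacent-or-equal point, as per the characterization theorem) is preserved while no point is fixed. Concretely, pick $p \in X$; for each $x \in X$ let $f(x)$ be a neighbor of $x$ on a shortest path from $x$ toward $p$ when $x \ne p$, and let $f(p)$ be any neighbor of $p$. One checks $d(f(x), f(y)) \le d(x,y) + \text{something}$ is not automatic, so this needs care — the naive ``gradient'' map need not be continuous.

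A cleaner route, which I expect is the one to pursue, is: since $X$ is finite and connected with $\#X \ge 2$, consider a spanning structure and use the fact that a finite connected graph on $\ge 2$ vertices admits a fixed-point-free graph endomorphism precisely when... — but that is false for general graphs (e.g.\ a single edge $K_2$ has the swap, fine, but a path $P_3$'s continuous self-maps: the map sending all three vertices of a path $a\adj b \adj c$ to $b$ fixes $b$; sending $a \mapsto b, b\mapsto a, c \mapsto b$ is continuous and fixed-point-free). The right general construction: order the vertices so that $X = \{x_1, \ldots, x_m\}$ with each $x_j$ ($j \ge 2$) adjacent to some earlier $x_i$ (possible by connectedness — a BFS/DFS ordering), define $f(x_1) = x_2$ (a neighbor) and for $j \ge 2$ define $f(x_j) = $ the earlier neighbor $x_i$; this ``retract toward the root'' map satisfies $f(x) \adjeq_\kappa x$ for all $x$ hence is continuous, and $f(x_1) = x_2 \ne x_1$ while $f(x_j) = x_i$ with $i < j$ so $f(x_j) \ne x_j$ — fixed-point-free. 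The main obstacle is verifying $\kappa$-continuity of whatever $f$ is constructed; by the characterization theorem this reduces to checking $f(x) \adjeq_\kappa f(y)$ whenever $x \adjeq_\kappa y$, which for the ``retract toward root'' map requires knowing that images of adjacent points are adjacent-or-equal — this is where one must be slightly careful, and it may be cleaner to instead invoke the cited result's actual argument, or to note that since each point moves to an \emph{adjacent} point the composite behavior on an edge $\{x,y\}$ lands in $\{f(x), f(y)\} \subseteq N_\kappa(x,1) \cup N_\kappa(y,1)$ and argue adjacency directly, possibly after refining the ordering. For the infinite case one drops finiteness but keeps connectedness and uses the same rooted construction, valid since every vertex still has finite distance to the root.
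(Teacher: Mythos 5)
The paper does not prove this statement at all --- it is quoted with the citation \cite{BEKLL} --- so the relevant comparison is with the argument of that cited paper, and against it your proposal has a genuine gap in the only-if direction. Your ``if'' direction and your reduction for disconnected $X$ are fine, but your main construction for connected $X$ is not: the claim that the BFS/DFS ``retract toward the root'' map is continuous because $f(x) \adjeq_\kappa x$ for all $x$ is a non sequitur (continuity requires $x \adj_\kappa y \Rightarrow f(x) \adjeq_\kappa f(y)$, not proximity of $f(x)$ to $x$), and you concede this without repairing it. The repair is not a formality: the parent map genuinely fails to be continuous on, e.g., a cycle $v_0 \adj v_1 \adj \cdots \adj v_5 \adj v_0$ (a digital simple closed curve of six points). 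A BFS from $v_0$ gives parents $f(v_3)=v_2$ and $f(v_4)=v_5$, yet $v_3 \adj v_4$ while $v_2$ and $v_5$ are neither equal nor adjacent. So the construction you propose does not prove the theorem, and ``refining the ordering'' is left entirely unspecified.

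The gap closes with a much simpler map, essentially the one used in \cite{BEKLL}. Assume $\#X \ge 2$ and pick distinct $x_0, x_1 \in X$, taking $x_1 \adj_\kappa x_0$ whenever $x_0$ has a neighbor (if $X$ is connected such a neighbor exists; if $x_0$ is isolated, any $x_1 \ne x_0$ will do). Define $f(x_0) = x_1$ and $f(x) = x_0$ for all $x \ne x_0$. If $x \adj_\kappa y$ and neither point is $x_0$, then $f(x) = f(y) = x_0$; if one of them is $x_0$, the images are $x_1$ and $x_0$, which are adjacent by the choice of $x_1$. Hence $f$ is $(\kappa,\kappa)$-continuous, and it is fixed-point-free since $f(x_0) = x_1 \ne x_0$ and $f(x) = x_0 \ne x$ otherwise. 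This single construction handles connected and disconnected, finite and infinite images at once, with no component reduction, no rooted ordering, and no appeal to the shortest path metric.
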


Further, discussion of the
FPP was unnecessary as the
FPP does not contribute 
constructively to any
of the paper's new assertions.
In light of Theorem~\ref{FPPcharacterize},
we conclude that Remark~4.1(2) of~\cite{Han19},
claiming to relate the FPP to
a non-trivial property of a
digital image, is nonsense.

\subsection{\cite{Han19}'s surfaces}
On page 239 of~\cite{Han19}, 
we find several definitions
related to the notion of a
{\em digital closed surface}. 
As these do not lead to any
of the paper's assertions,
their purpose seems to be to
bloat the paper and
the author's citation count.

\subsection{\cite{Han19}'s definition of $k$-DC maps}
Definition~6 of~\cite{Han19} states the following.
\begin{definition}
    \label{Han-kDC}
    Let $f:(X,k) \to (X,k)$ be a self-map of a
    digital image in~$\Z^n$. Let $d$ be the
    Euclidean metric for~$\Z^n$. If there exists
    $\gamma \in [0,1)$ such that for all $x,y \in X$,
    $d(fx,fy) \le \gamma d(x,y)$ then $f$ is a
    $k-DC$-self-map.
    Besides, we say that $f$ has a digital version of the Banach principle (DBP for short).
\end{definition}

We note the following.
\begin{itemize}
    \item \cite{Han19} attributes Definition~\ref{Han-kDC} to Han's own paper~\cite{HanBan}. 
    In fact,~\cite{HanBan} correctly
    attributes this definition to~\cite{EgeKaraca-Ban}.
    Thus Han appears to be 
    claiming credit for introducing 
    a notion for which he
    knows credit
    belongs to others.
    \item \cite{Han19} never tells us what ``DC" 
    represents. According to~\cite{EgeKaraca-Ban}, it
    abbreviates ``digital contraction".
\item The Banach Principle is generally not part of the definition of
a contraction map.
Rather, the digital Banach 
Principle is the following.
\begin{thm}
\label{DBPthm}
    {\rm \cite{EgeKaraca-Ban}}
    Let $(X, d, \kappa)$ be a complete digital metric space, where
$d$ is the Euclidean metric in $\Z^n$. Let $f : X \to X$ be a digital contraction map.
Then $f$ has a unique fixed point.
\end{thm}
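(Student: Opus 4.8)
The plan is to follow the classical Banach contraction-mapping argument, noting that in the digital setting the ``convergence'' step is essentially free: a subset of $\Z^n$ with the Euclidean metric is uniformly discrete (Remark~\ref{unifDiscrete}), and in fact any two distinct points of $\Z^n$ are at Euclidean distance at least~$1$. First I would fix an arbitrary $x_0 \in X$ and form the orbit $x_{k+1} = f(x_k)$ for $k \ge 0$. If $x_1 = x_0$ we are already done, since then $f(x_0) = x_0$; so assume $x_1 \neq x_0$.

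By Definition~\ref{Han-kDC} (the notion ``digital contraction map'' being exactly Han's $k$-DC-self-map) there is $\gamma \in [0,1)$ with $d(fx,fy) \le \gamma\, d(x,y)$ for all $x,y \in X$, so an easy induction on $k$ gives $d(x_{k+1}, x_k) \le \gamma^k d(x_1, x_0)$. Since $\gamma < 1$, the right-hand side tends to $0$, so for all sufficiently large $k$ we have $d(x_{k+1}, x_k) < 1$, which forces $x_{k+1} = x_k$, i.e. $f(x_k) = x_k$. Thus $f$ has a fixed point. One could instead run the usual estimate $d(x_m, x_n) \le \frac{\gamma^m}{1-\gamma}\, d(x_1,x_0)$ to see the orbit is Cauchy and then invoke Proposition~\ref{eventuallyConst}; that route has the incidental merit of showing the completeness hypothesis is redundant here, since a uniformly discrete metric space is automatically complete.

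For uniqueness, suppose $f(a) = a$ and $f(b) = b$. Then $d(a,b) = d(fa, fb) \le \gamma\, d(a,b)$, and since $\gamma < 1$ this is possible only if $d(a,b) = 0$, i.e. $a = b$. Hence the fixed point is unique.

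I do not expect a genuine obstacle: the only computation of any substance is the inductive distance bound, which is routine, and the lattice/uniform-discreteness observation does the rest. The real content of the statement is that it is a verbatim transcription of the Euclidean Banach principle in which the adjacency~$\kappa$, the restriction to~$\Z^n$, the particular choice of the Euclidean metric, and completeness all play no essential role --- which is presumably the point the surrounding discussion is making.
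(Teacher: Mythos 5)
Your argument is correct, but note that the paper itself offers no proof of this statement: Theorem~\ref{DBPthm} is quoted verbatim from~\cite{EgeKaraca-Ban}, and the surrounding text is only concerned with Han's misattribution of it. The proof in the cited source is the classical Banach iteration: one shows the orbit $x_{k+1}=f(x_k)$ is Cauchy via the geometric estimate and then invokes the completeness hypothesis to obtain a limit, which continuity-type estimates identify as a fixed point. Your route replaces the convergence step by the observation that distinct points of $\Z^n$ are at Euclidean distance at least $1$, so once $d(x_{k+1},x_k)<1$ the orbit is literally constant; this is sound, and your inductive bound $d(x_{k+1},x_k)\le\gamma^k d(x_1,x_0)$ and the uniqueness argument are both fine. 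What your version buys is exactly the point the present paper makes elsewhere: by Remark~\ref{unifDiscrete} and Proposition~\ref{eventuallyConst}, a subset of $\Z^n$ with an $\ell_p$ metric is uniformly discrete, Cauchy sequences are eventually constant, and completeness is automatic, so the completeness hypothesis in the quoted theorem is redundant and the ``digital Banach principle'' is a trivialized transcription of the Euclidean one (compare also Theorem~\ref{Han(4.1)corrected}, where the same finiteness/discreteness phenomenon does the work). The only cosmetic slip is attributing the notion of digital contraction map to Definition~\ref{Han-kDC}; as the paper stresses, the definition originates in~\cite{EgeKaraca-Ban}, not in Han's papers.
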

\item Han attributes
Theorem~\ref{DBPthm} to 
both~\cite{EgeKaraca-Ban}
and to his own paper~\cite{HanBan}.
Since~\cite{HanBan} does not give a new proof of this
theorem, but merely (properly) 
cites~\cite{EgeKaraca-Ban} 
for it, it appears Han is again
claiming credit in~\cite{Han19}
that he knows belongs to others.
\end{itemize}

\subsection{\cite{Han19}'s Proposition 3.6}
\label{discontSec}
This ``proposition" claims that
a digital contraction map is
digitally continuous. A
counterexample to this claim
is found in Example~4.1
of~\cite{BxSt19}, parts of which we
restate for use in other sections 
of this paper.
\begin{exl}
   {\rm \cite{BxSt19}}
   \label{counterHan5.1}
    Let $X=\{x_0,x_1,x_2\} \subset \Z^5$,
    where
    \[ x_0=(0,0,0,0,0),~~~~~
    x_1=(2,0,0,0,0),~~~~~
    x_2=(1,1,1,1,1)
    \]
    \begin{enumerate}
        \item If $d$ is
    the Euclidean metric,
    \[ d(x_0,x_1) = 2 < \sqrt{5} = d(x_0,x_2) = d(x_1,x_2).\]
    \item
    \[ x_0 \adj_{c_5} x_2
    \adj_{c_5} x_1,
    ~~~~~
    \mbox{so $X$ is $c_5$-connected.}
    \]
    \item  Let $f: X \to X$
    be given by
    \[fx_0 = x_0 = fx_1,~~~~~fx_2 = x_1.\]
    Then $i \neq j$ implies
    \begin{equation}
    \label{BxStExlIneq}
    d(fx_i,fx_j) \le (2/\sqrt{5})d(x_i,x_j), 
    \end{equation}
    so~$f$ is
    a digital contraction map.
    \item $f$ is not $c_5$-continuous, 
    since $fx_0$ and
    $fx_2$ are neither equal nor
    $c_5$-adjacent.
    \end{enumerate}
\end{exl}

\subsection{\cite{Han19}'s (4.1)}
Statement~(4.1) of~\cite{Han19} claims the following.
\begin{assert}
\label{Han(4.1)}
    For any digital contraction map
    $f: (X,\kappa) \to (X,\kappa)$ and $x' \in \Fix(f)$,
    there is a positive integer~$n$ such that
    \[ X \supset f(X) \supset f^2(X) \supset \ldots 
     \supset f^n(X) = \{x'\}.
    \]
\end{assert}

Neither proof nor citation
is offered in~\cite{Han19}
for the claim that
$f^{i-1}(X) \supset f^i(X)$.
Thus, this claim is unproven.

Further, the assertion that
$f^n(X) = \{x'\}$ for some~$n$ 
is not generally true, as 
shown by the following.

\begin{exl}
Let $X = \{2^n\}_{n=0}^{\infty}$. Let 
$f: (X,c_1) \to (X,c_1)$
be defined by
\[ f(x) = \left \{ \begin{array}{ll}
    1 & \mbox{if } x = 1; \\
    x/2 &  \mbox{if } x > 1.
\end{array}
\right .
\]
Then for $n > 0$,
\[ d(f(1),f(2^n)) = d(1, 2^{n-1}) < 
1/2 \, \cdot d(1,2^n)
\]
and for $1 \le m \le n$,
\[ d(f(2^m), f(2^n)) = 2^{n-1} - 2^{m-1} =
   1/2 \cdot (2^n - 2^m) = 1/2 \, \cdot d(2^m,2^n).
\]
So $f$ is a contraction map 
with $1 \in \Fix(X)$. 
But $f^n(X) = X$ for all $n \in \N$.
\end{exl}

We can add an assumption of
finiteness to the hypotheses 
of Assertion~\ref{Han(4.1)}, and
use weaker assumptions about
the maps studied, to get
a conclusion that is valid, 
though not as strong as that
claimed by Assertion~\ref{Han(4.1)}.

\begin{thm}
    \label{Han(4.1)corrected}
    Let $(X, d)$ be a 
    finite metric space.
    Let $f: X \to X$ be a map 
    such that 
    \begin{equation}
    \label{distDecrease}
        \mbox{for $x,y \in X$,
    $x \neq y$, we have } 
    d(fx,fy) < d(x,y).
    \end{equation} 
    Then $f$ has a unique fixed point~$x'$,
    and there exists~$n \in \N$ 
    such that $f^n(X) = \{x'\}$.
\end{thm}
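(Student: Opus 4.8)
The plan is to use finiteness to pass to a ``stable'' image set on which $f$ acts bijectively, and then rule out that this set has more than one point by a diameter argument; uniqueness of the fixed point is then a one-line consequence of (\ref{distDecrease}).

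First I would note that $f(X) \subseteq X$, so applying $f$ repeatedly produces a nested chain $X \supseteq f(X) \supseteq f^2(X) \supseteq \cdots$. Since $X$ is finite, this descending chain of subsets must stabilize: there is some $N \in \N$ with $f^N(X) = f^{N+1}(X)$; write $A = f^N(X)$. Then $f(A) = f(f^N(X)) = f^{N+1}(X) = A$, so $f$ restricts to a surjection $f|_A : A \to A$, and a surjective self-map of a finite set is a bijection. So $f|_A$ is a bijection of $A$.

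Next I would show $\#A = 1$. Suppose $\#A \ge 2$, and pick $a,b \in A$ with $a \neq b$ realizing $diam(A) = d(a,b)$ (attained because $A$ is finite and nonempty). Since $f|_A$ is a bijection, there are $p,q \in A$ with $fp = a$ and $fq = b$, and $p \neq q$ because $fp = a \neq b = fq$. Then (\ref{distDecrease}) gives $d(a,b) = d(fp,fq) < d(p,q) \le diam(A) = d(a,b)$, a contradiction. Hence $A = \{x'\}$ for a single point $x'$, and $f(A) = A$ forces $fx' = x'$; taking $n = N$ gives $f^n(X) = \{x'\}$. Finally, if $y \in X$ satisfies $fy = y$ and $y \neq x'$, then $d(x',y) = d(fx',fy) < d(x',y)$, impossible, so the fixed point is unique. (Alternatively, existence of a fixed point can be obtained directly by minimizing the map $x \mapsto d(x,fx)$ over the finite set $X$, since a minimizer $x_0$ with $fx_0 \neq x_0$ would be contradicted by $d(fx_0, f^2 x_0) < d(x_0, fx_0)$; but the stabilization argument above already yields this for free.)

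I do not anticipate a genuine obstacle. The only points needing care are bookkeeping ones: the strict inequality (\ref{distDecrease}) is hypothesized only for \emph{distinct} arguments, so at each application (forming the bijection, and comparing diameters) one must verify the relevant preimages are distinct, which holds precisely because their images are distinct. Finiteness is used twice and is essential: it makes the chain $\{f^k(X)\}$ eventually constant, and it guarantees $diam(A)$ is attained by an actual pair of points of $A$.
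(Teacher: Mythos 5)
Your proof is correct, but it follows a genuinely different route from the paper's. The paper first observes that on a finite space the strict inequality (\ref{distDecrease}) already makes $f$ a digital contraction map (take $\gamma$ to be the maximum of $d(fx,fy)/d(x,y)$ over the finitely many distinct pairs), and then invokes the digital Banach fixed point theorem (Theorem~\ref{DBPthm}) to get existence and uniqueness of $x'$; for the second claim it argues that $\#f^{i-1}(X)>1$ forces $diam(f^i(X)) < diam(f^{i-1}(X))$, and since each $diam(f^i(X))$ lies in the finite set $\{d(x,y) \mid x,y \in X\}$, some $f^n(X)$ must have diameter $0$, hence equal $\{x'\}$ because $f^n(x')=x'$. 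You instead stabilize the descending chain $X \supseteq f(X) \supseteq f^2(X) \supseteq \cdots$, note that $f$ restricts to a bijection of the stable set $A$, and kill $\#A \ge 2$ by pulling back a diameter-realizing pair; this yields $f^N(X)=\{x'\}$ and the fixed point simultaneously, with uniqueness as a one-line consequence of (\ref{distDecrease}). What your approach buys is self-containment: no appeal to the Banach theorem and no need to convert (\ref{distDecrease}) into a contraction with a uniform constant. What the paper's approach buys is brevity given the cited theorem, an explicit tie to the contraction-map setting that the surrounding section is about, and an implicit bound on $n$ in terms of the number of distinct distance values in $X$. Your bookkeeping on distinctness of preimages and attainment of the diameter is exactly the care needed, so there is no gap.
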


\begin{proof}
    Let $f^0 = \id_X$.
    Since $X$ is finite,~(\ref{distDecrease})
    implies~$f$ is a digital contraction map. By
    Theorem~\ref{DBPthm},
    $f$ has a unique fixed point~$x'$.

From~(\ref{distDecrease}), we have that
    \[ \mbox{if }~~~ \#f^{i-1}(X) > 1
    ~~~\mbox{ then }~~~
    diam(f^i(X)) < diam(f^{i-1}(X)).
    \]
    Since $diam(f^i(X)) \in
    \{d(x,y) \mid x,y \in X \}$,
    a set with finitely many members, it follows that
    for some~$n \in \N$, $diam(f^n(X)) =0$, so
    $\#f^n(X) = 1$, and
since $f^n(x')=x'$, it follows that
    $f^n(X) = \{ x' \}$.
\end{proof}

\subsection{\cite{Han19}'s complexity measure}
Definition~7 of~\cite{Han19} gives the following measure of 
complexity of digital images. 
\begin{definition}
    \label{HanComplexity}
    {\rm \cite{Han19}}
    Let $(X,d,\kappa)$ be a digital metric space,
    where $X \subset \Z^n$ and $d$ is the 
    Euclidean metric. Then
    \[ C^{\sharp}(X,\kappa) =
    \max \left \{
    \begin{array}{l} m \in \N \mid \mbox{ for some $\kappa-$ contraction
       mapping }
    f: X \to X \mbox{,} \\
     |f^{m-1}(X)| > 1
       \mbox{ and } |f^m(X)| = 1
    \end {array} \right \}
    \]
\end{definition}

Theorem~6.1 of~\cite{Han19} shows that 
$C^{\sharp}(X,\kappa)$ is not a digital 
topological invariant, i.e., there are
isomorphic digital images
$(X,\kappa)$ and $(Y,\lambda)$ such that 
$C^{\sharp}(X,\kappa) \neq C^{\sharp}(Y,\lambda)$.
This seems more evidence of the artificial nature
of the notion of a digital metric space.

\subsection{~\cite{Han19}'s finiteness requirement}
Just before Remark~4.1 of~\cite{Han19}, we find
``... in digital geometry we deal with only a 
finite digital image ...." This is an odd statement
from the author who introduced infinite digital
images for covering spaces~\cite{HanNon}.

\subsection{\cite{Han19}'s Lemma 4.2}
What Han calls the
$2n$ adjacency in
$\Z^n$ is what we
call the $c_1$ adjacency.
The following is
a paraphrased version
of \cite{Han19}'s Lemma~4.2.

\begin{lem}
    \label{Han4.2}
    Let $(X,c_1)$ be
    a digital simple
    closed curve.
    Let $f \in C(X,c_1)$ be a
    digital contraction map.
    Then $f(X) \subset N_{c_1}(x',1)$, where
    $x' \in \Fix(f)$.
\end{lem}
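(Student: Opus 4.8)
The plan is to exploit the structure of a digital simple closed curve together with the inequality defining a digital contraction. First I would recall that a digital simple closed curve $(X,c_1)$ is a finite $c_1$-connected image in which every point has exactly two $c_1$-neighbors, so the shortest path metric on $X$ is just arc length along the curve and $N_{c_1}(x',1) = \{x',\,a,\,b\}$, where $a,b$ are the two $c_1$-neighbors of $x'$. Let $\gamma \in [0,1)$ be the contraction constant for $f$, with $d$ the Euclidean metric, and let $x' \in \Fix(f)$. The goal is: for every $z \in X$, $f(z)$ lies in $\{x',a,b\}$.

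The key step is to compare the \emph{curve distance} (number of $c_1$-steps along $X$ from $x'$) with the Euclidean contraction bound. Fix $z \in X$ and write $D(z)$ for the length of a shortest $c_1$-path in $X$ from $x'$ to $z$. Since $f \in C(X,c_1)$, $f$ sends a $c_1$-path of length $D(z)$ from $x'$ to $z$ to a $c_1$-path (allowing repeated vertices) of length at most $D(z)$ from $f(x')=x'$ to $f(z)$; hence the curve distance from $x'$ to $f(z)$ is at most $D(z)$. On the other hand, because consecutive points of $X$ are $c_1$-adjacent in $\Z^n$, any two points of $X$ at curve distance $k$ are at Euclidean distance at most $k$, and conversely two points of $X$ at curve distance $k$ are at Euclidean distance at least $1$ whenever $k \ge 1$ (distinct integer points). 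Combining the contraction inequality $d(x',f(z)) = d(f(x'),f(z)) \le \gamma\,d(x',z) \le \gamma\,D(z)$ with the fact that $f(z)$ has curve distance $\le D(z)$ from $x'$, one iterates: applying the argument to $f(z)$ in place of $z$ and using continuity repeatedly, the curve distance from $x'$ to $f^m(z)$ is non-increasing in $m$, while the Euclidean distance is bounded by $\gamma^m d(x',z) \to 0$; since any point of $X$ distinct from $x'$ is Euclidean distance $\ge 1$ from $x'$, for large $m$ we get $f^m(z) = x'$, which forces the curve distance from $x'$ to $f(z)$ to be small. A cleaner finite version: let $M = \max_{z \in X} D(z)$ be the ``curve radius''; then $d(f(z),x') \le \gamma\,d(z,x') \le \gamma M$ for all $z$, so $f(X)$ is contained in the Euclidean ball of radius $\gamma M < M$ about $x'$, and $f(X)$ is $c_1$-connected and contains $x'$; repeating with $f(X)$ in place of $X$ shrinks the enclosing radius geometrically, so after finitely many iterations $f^k(X) = \{x'\}$ (this recovers a weak form of Assertion~\ref{Han(4.1)} in this setting). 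The point is then that on a simple closed curve, a $c_1$-connected subset containing $x'$ whose points are all within Euclidean distance $< 2$ of $x'$ must lie in $\{x',a,b\} = N_{c_1}(x',1)$, because the only points of $X$ reachable from $x'$ by a connected subset before ``wrapping'' are the immediate neighbors, and leaving $N_{c_1}(x',1)$ costs curve distance $\ge 2$ hence—here is where the geometry of $c_1$ in $\Z^n$ enters—cannot be done while all pairwise Euclidean distances within the connected set stay below the relevant threshold.

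The main obstacle I anticipate is the final geometric step: converting the Euclidean contraction bound into the \emph{combinatorial} statement that $f(X)$ cannot escape the one-step neighborhood $N_{c_1}(x',1)$. The subtlety is that Euclidean distance and curve distance agree only locally on a simple closed curve; globally the curve can bend back so that points far along the curve are Euclidean-close. So the argument cannot merely say ``$d(f(z),x')$ is small, hence $f(z)$ is near $x'$ on the curve.'' Instead one must use connectivity of $f(X)$: $f(X)$ is a $c_1$-connected subset of the simple closed curve containing $x'$, so it is an \emph{arc} (or all of $X$) through $x'$; if that arc properly contained $N_{c_1}(x',1)$ it would contain a point $w$ at curve distance $\ge 2$ from $x'$ together with all intermediate points, and then taking $z$ with $f(z)=w$ and tracking a realizing path one derives, via the non-expansion of curve distance under continuous maps and the strict $\gamma$-contraction of Euclidean distance, a contradiction with the iterated bound $d(f^m(z),x') \le \gamma^m\,\mathrm{diam}(X) \to 0$ once one checks $w \ne x'$ persists along the orbit. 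I would organize the write-up so that this connectivity-plus-iteration argument is the heart, with the $\ell_2$-versus-curve-distance comparisons relegated to short lemmas.
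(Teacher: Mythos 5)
There is a genuine gap, and it sits exactly where you predicted: the final step converting ``$f(X)$ is a $c_1$-connected arc through $x'$ with small Euclidean enclosing radius'' into ``$f(X)\subset N_{c_1}(x',1)$'' is never actually proved. The intermediate claim that a $c_1$-connected subset containing $x'$ all of whose points lie within Euclidean distance $<2$ of $x'$ must lie in $N_{c_1}(x',1)$ is false as stated (take the 4-point square $\{(0,0),(1,0),(1,1),(0,1)\}$ with $c_1$: the point $(1,1)$ is at curve distance $2$ from $(0,0)$ but Euclidean distance $\sqrt 2<2$), and in any case your argument only yields the enclosing radius $\gamma M$, not $<2$. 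The fallback contradiction you sketch also does not materialize: if $f(z)=w$ has curve distance $\ge 2$ from $x'$, the iterated bound $d(f^m(z),x')\le\gamma^m\,\mathrm{diam}(X)\to 0$ merely says the orbit of $z$ is eventually $x'$, which is entirely consistent with $f(z)$ being far from $x'$ along the curve; there is no reason ``$w\neq x'$ persists along the orbit,'' so no contradiction is reached. Your preliminary observations (non-expansion of curve distance under $c_1$-continuity, $f^k(X)=\{x'\}$ eventually) are fine, but they do not close the argument.

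The missing idea is a local one that makes the whole global apparatus unnecessary: for the $c_1$ adjacency, adjacent points of $\Z^n$ are at Euclidean distance exactly $1$, while distinct points of $\Z^n$ are at distance $\ge 1$. Hence if $x$ and $y$ are $c_1$-adjacent, the contraction inequality gives $d(fx,fy)\le\gamma\cdot 1<1$, forcing $fx=fy$; since a simple closed curve is $c_1$-connected, $f$ is constant, and the constant must be the fixed point $x'$, so $f(X)=\{x'\}\subset N_{c_1}(x',1)$. Note that continuity of $f$ is not even needed. This is how the paper treats Lemma~\ref{Han4.2}: it is subsumed by the stronger result (Theorem~4.7(1) of \cite{HanBan}, generalized as Theorem~4.2 of \cite{BxSt19}) that any digital contraction map on a $c_1$-connected digital metric space with an $\ell_p$ metric is constant. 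If you want to salvage your write-up, replace the iteration-plus-connectivity machinery by this one-step adjacency argument.
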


The following result contains
Lemma~\ref{Han4.2}.
It appeared as Theorem~4.7(1) of
Han's own paper
\cite{HanBan} and, slightly 
generalized, as Theorem~4.2
of~\cite{BxSt19}.

\begin{thm}
Let $(X,d,c_1)$ be a
connected digital metric space, where
$d$ is any $\ell_p$ metric on $\Z^n$.
Let $f: X \to X$ be
a digital contraction
map. Then $f$ is a
constant function.
\end{thm}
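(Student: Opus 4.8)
The plan is to reduce the statement to the special case already handled by the commented-out Theorem~\ref{d-under-1-const} (with $g = f$), namely that a distance-decreasing self-map whose values on adjacent points are at $\ell_p$-distance at most~$1$ must collapse each $c_1$-edge to a point. First I would observe that a digital contraction map $f$ satisfies $d(fx,fy) \le \gamma\, d(x,y)$ with $\gamma \in [0,1)$, so in particular $d(fx,fy) < d(x,y)$ whenever $x \neq y$. The key geometric fact is that in the $c_1$ adjacency, $x \adj_{c_1} y$ forces $x$ and $y$ to differ in exactly one coordinate by exactly~$1$, hence $d(x,y) = 1$ for \emph{every} $\ell_p$ metric (the $\ell_p$ norm of a unit coordinate vector is~$1$). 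Therefore, for adjacent $x,y$ we get $d(fx,fy) \le \gamma < 1$.

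Next I would use that $fx$ and $fy$ both lie in $X \subset \Z^n$, so $d(fx,fy)$ is the $\ell_p$ norm of an integer vector; the only way such a quantity can be strictly less than~$1$ is for it to be~$0$, forcing $fx = fy$. This shows $f$ is constant on every $c_1$-edge. Then, invoking connectivity of $(X,c_1)$: given arbitrary $a,b \in X$, pick a $c_1$-path $a = x_0 \adj_{c_1} x_1 \adj_{c_1} \cdots \adj_{c_1} x_m = b$, and conclude by induction along the path that $fa = fx_0 = fx_1 = \cdots = fx_m = fb$. Hence $f$ is constant on all of $X$.

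A few remarks on scope and pitfalls. The hypothesis that $f \in C(X,c_1)$ is in fact not needed for this argument — unlike the situation in Theorem~\ref{d-under-1-const} where continuity of the \emph{comparison} map $g$ was used to bound $d(gx,gy) \le 1$; here the contraction inequality supplies that bound directly from $d(x,y) = 1$. (This is exactly why the cited results of~\cite{HanBan,BxSt19} are a ``slight generalization'': they drop the continuity assumption.) The only place one must be slightly careful is the claim ``integer vector of $\ell_p$-norm $< 1$ is zero'': this is immediate since a nonzero integer vector has some coordinate of absolute value $\ge 1$, and all $\ell_p$ norms dominate the $\ell_\infty$ norm. So I do not anticipate a genuine obstacle; the main thing to get right is the clean statement that $c_1$-adjacency implies $\ell_p$-distance exactly~$1$, which is what makes the contraction factor $\gamma < 1$ bite. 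If one wanted the version for general $c_u$ adjacency (as in the commented Proposition~\ref{shrinkage}), one would instead need $\gamma < u^{-1/p}$, since $c_u$-adjacent points can be at $\ell_p$-distance as large as $u^{1/p}$; but for $c_1$ the bare contraction hypothesis suffices.
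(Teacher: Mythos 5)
Your proof is correct and is essentially the argument this paper relies on: the paper itself states the theorem with citation only (Theorem~4.7(1) of~\cite{HanBan}, Theorem~4.2 of~\cite{BxSt19}), and the same two-step scheme --- $c_1$-adjacent points are at $\ell_p$-distance exactly $1$, so a contraction sends them to points of $\Z^n$ at distance $\le \gamma < 1$, hence equal, and then a path/induction argument over the connected image finishes --- is exactly the edge-collapse-plus-connectivity argument used in the source's commented-out material and in the cited papers. One cosmetic slip: the advertised reduction to that commented-out comparison theorem would need $g = \id_X$ rather than $g = f$ (with $g = f$ its hypothesis $d(fx,fy) < d(gx,gy)$ can never hold), but since your direct argument never actually invokes that reduction, nothing is affected.
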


\subsection{\cite{Han19}'s Lemma~4.3}
 \begin{figure}
     \centering
     \includegraphics{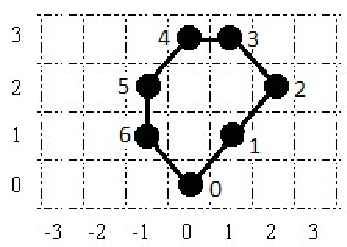}
     \caption{A digital simple closed curve
     $(X,c_2)$ of 7 points - 
     a counterexample to Han's claim that 
     $\ell$ must be even, where
     $\ell = \#S_k^{n,\ell}$. 
     $X = \{x_i\}_{i=0}^6$ with
     each vertex of the graph 
     marked by its index;
     e.g., $x_0=(0,0)$, $x_1=(1,1)$, etc.}
     \label{fig:7ptSCC}
 \end{figure}

Han uses the symbol $S_k^{n,\ell}$ for a digital
simple closed curve in~$\Z^n$ of~$\ell$ points, using
$k$-adjacency. Between the 
statement and the ``proof" 
of Lemma~4.3
of~\cite{Han19}, we find
``... the number~$\ell$ of~$S_k^{n,\ell}$ ... is an
even number....", an assertion he attributes to his
paper~\cite{HanNon}. Note the following.
\begin{itemize}
    \item \cite{HanNon} contains no such assertion.
    \item Figure~\ref{fig:7ptSCC} shows a counterexample
         to this assertion.
\end{itemize}

We also find, in the ``proof"
of ``Lemma"~4.3, toward the
bottom of page~243, the
self-contradictory ``... 
map~$f$ need not be a constant 
map but have [{\em sic}] the property $Im (f) = \{ x'\}$...."

\subsection{\cite{Han19}'s Theorem 4.4}
The following is a paraphrased 
version of~\cite{Han19}'s Theorem~4.4.
\begin{assert}
    \label{Han4.4}
   If $(X,c_u)$ is a digital simple
   closed curve for
   which every digital contraction $f$
   (using the Euclidean metric) that has a fixed
   point $x'$ satisfies
   $f(X) \subset N_{c_u}(x',1)$,
   then $C^{\sharp}(X,c_u) \le 2$.
\end{assert}

\begin{itemize}
\item We show below that
the claim appearing at the bottom of page~244 
of~\cite{Han19}, that this
assertion derives from
~\cite{Han19}'s Lemmas~4.2
and~4.3, is unnecessary.
\item There are errors in the
argument Han offers for its 
proof. We indicate the errors
and give a corrected proof of a somewhat
weaker assertion, as follows.
\end{itemize}

\begin{thm}
    \label{correctHan4.4}
       If $(X,c_u)$ is a digital simple
   closed curve for
   which every digital contraction $f$
   (using the Euclidean metric) that has a fixed
   point $x'$ satisfies
   $f(X) \subset N_{c_u}(x',1)$,
   then $C^{\sharp}(X,c_u) \le 3$.
\end{thm}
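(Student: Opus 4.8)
The plan is to bound the index $m$ occurring in Definition~\ref{HanComplexity} for an arbitrary digital contraction and then pass to the supremum. So let $f:X\to X$ be a digital contraction with respect to the Euclidean metric $d$, with ratio $\gamma\in[0,1)$, and suppose $\#f^{m-1}(X)>1$ and $\#f^m(X)=1$ for some integer $m\ge 1$; it suffices to show $m\le 3$. First I would produce a fixed point: a digital simple closed curve is finite, so $(X,d)$ is uniformly discrete (Remark~\ref{unifDiscrete}) and hence complete (Proposition~\ref{eventuallyConst}), and Theorem~\ref{DBPthm} gives $f$ a unique fixed point $x'$. Since $x'\in X$, $x'=f^m(x')\in f^m(X)$, so $f^m(X)=\{x'\}$; in particular $f$ is a digital contraction with a fixed point, so the hypothesis yields $f(X)\subseteq N_{c_u}(x',1)$. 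As $X$ is connected, the shortest-path metric defining this neighborhood is that of all of $X$, and in a digital simple closed curve each vertex has exactly two neighbors, so $N_{c_u}(x',1)=\{x',a,b\}$, where $a,b$ are the two $c_u$-neighbors of $x'$ and $x',a,b$ are distinct. Hence $f(X)\subseteq\{x',a,b\}$.

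The core step is then to iterate inside this three-point set. Since $a\neq x'$ we have $d(a,x')>0$, so $d(fa,x')=d(fa,fx')\le\gamma\,d(a,x')<d(a,x')$; thus $fa\neq a$, and as $fa\in f(X)\subseteq\{x',a,b\}$ we get $fa\in\{x',b\}$, and symmetrically $fb\in\{x',a\}$. Moreover $fa=b$ and $fb=a$ cannot both hold, since that would give $d(fa,fb)=d(b,a)=d(a,b)$, contradicting $d(fa,fb)\le\gamma\,d(a,b)<d(a,b)$. Now $f^3(X)=f^2(f(X))\subseteq f^2(\{x',a,b\})=\{x',f^2a,f^2b\}$. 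If $f^2a\neq x'$, then $fa\neq x'$ (otherwise $f^2a=fx'=x'$), so $fa=b$ and $f^2a=fb$; since $fb=x'$ would force $f^2a=x'$, we would have $fb=a$, i.e.\ the excluded pair $fa=b,\ fb=a$. Hence $f^2a=x'$, and symmetrically $f^2b=x'$, so $f^3(X)=\{x'\}$. Because the sequence $\#X,\#f(X),\#f^2(X),\ldots$ is non-increasing and $\#f^3(X)=1$, the assumption $\#f^{m-1}(X)>1$ forces $m-1\le 2$, i.e.\ $m\le 3$. Taking the supremum over all digital contractions $f$ gives $C^{\sharp}(X,c_u)\le 3$.

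The step I expect to require the most care is the iteration just sketched, and specifically the two appeals to the \emph{strict} inequality $d(fx,fy)<d(x,y)$ for $x\neq y$ (valid precisely because $\gamma<1$): these rule out $fa=a$ and the transposition $fa=b,\ fb=a$, which is exactly what collapses every orbit in at most three steps. The same analysis also explains why the bound is $3$ rather than Han's claimed $2$ in Assertion~\ref{Han4.4}: the configuration $fa=x'$, $fb=a$ survives the argument, giving $\#f(X)=3>1$, then $f^2(X)=\{x',a\}$ with $\#f^2(X)=2>1$, and finally $\#f^3(X)=1$, so $m=3$ is not excluded; thus $3$, and not $2$, is the value this argument delivers.
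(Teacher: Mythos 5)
Your proof is correct, and it reaches the same intermediate goal as the paper's proof---that $f^3(X)=\{x'\}$ for every digital contraction $f$ with fixed point $x'$---but by a different mechanism. The paper's argument is metric-arithmetic: it observes that points of $N_{c_u}(x',1)$ lie at Euclidean distance $0$, $1$, or $\sqrt{2}$ from $x'$, and uses the strict decrease $d(f^{k+1}x,x')<d(f^kx,x')$ together with the quantization of these distances to force the chain $\sqrt{2}\to 1\to 0$ in at most three steps; note that as written this is really a $c_2$ computation (for general $c_u$ the adjacent distances run up to $\sqrt{u}$). Your argument is instead a purely combinatorial case analysis of the self-maps of the three-point set $\{x',a,b\}$ that fix $x'$ and strictly shrink distances between distinct points: ruling out $fa=a$, $fb=b$, and the transposition $fa=b,\ fb=a$ forces $f^2a=f^2b=x'$. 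This buys you independence from the particular distance values, so the same proof works verbatim for any $c_u$ and indeed for any metric in which the hypothesis $f(X)\subset N_{c_u}(x',1)$ is formulated, whereas the paper's route is tied to the Euclidean values $1,\sqrt2$. You also make explicit two points the paper leaves implicit: the existence of the fixed point (via uniform discreteness, Proposition~\ref{eventuallyConst}, and Theorem~\ref{DBPthm}) and the monotonicity of $\#f^k(X)$ used to convert $f^3(X)=\{x'\}$ into the bound $m\le 3$ in Definition~\ref{HanComplexity}; your closing remark about the surviving configuration $fa=x'$, $fb=a$ correctly explains why this method yields $3$ rather than Han's claimed $2$.
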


\begin{proof}
    We modify Han's
    argument.

    If for every such~$f$ we
    have $f(X) = \{x'\}$ then
    $C^{\sharp}(X,c_u)=1$.

    Otherwise, for some 
    contraction mapping~$f$
    on~$X$ we
    have $f(X) \neq \{x'\}$ but $f(X) \subset N_{c_u}(x',1)$. Here,
    Han claims that
    $\#N_{c_u}(x',1) \le 2$
    and that this yields the
    desired conclusion. 
    The correct statement is
    $\#N_{c_u}(x',1) = 3$
    (e.g., in Figure~\ref{fig:7ptSCC}, 
    $N_{c_2}(x_4,1)=\{x_3,x_4,x_5\}$).
    Neither Han's erroneous claim nor this corrected
    statement yields Han's claimed conclusion. 

However, for
    $fx \adj_{c_2} x'$, $d(fx,x') \in \{1, \sqrt{2} \}$.
    Since $f$ is a contraction map and $x'$
    is a fixed point of~$f$:
\begin{itemize}
    \item For each $fx \adj_{c_2} x'$ such that
          $d(fx,x')=1$, we have $d(f^2x,x') = 0$.
    \item For each $fx \adj_{c_2} x'$ such that
          $d(fx,x')=\sqrt{2}$, we have $d(f^2x,x') = 0$
          or $d(f^2x,x') = 1$. In the latter case,
          $d(f^3x,x')=0$.
\end{itemize}
The assertion follows.
\end{proof}

\subsection{\cite{Han19}'s uniform $k$-connectedness}
\label{unifConnected}
The following paraphrases Definition~8 of~\cite{Han19}.

\begin{definition}
    {\rm \cite{Han19}}.
    \label{unifConnected}
    A $c_u$-connected digital
    image $X$ is {\em uniformly $c_u$-connected}
    if $x \adj_{c_u} y$ in~$X$
    implies $d(x,y) = \sqrt{u}$.
\end{definition}

The following shows the triviality of
this concept in its most natural
setting.

\begin{prop}
    \label{unifConnectedTriv}
    Let $(X,c_u)$ be uniformly 
    $c_u$-connected. Let~$d$ be
    the Euclidean metric on~$X$ and let
    $f: X \to X$ be a $c_u$-continuous
    digital contraction map. Then~$f$
    is a constant map.
\end{prop}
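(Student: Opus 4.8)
\medskip

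\noindent The plan is to show that $f$ sends every $c_u$-adjacent pair of points of $X$ to a single point, and then to spread this over all of $X$ using $c_u$-connectedness. If $\#X = 1$ the statement is trivial, so I assume $X$ has at least two points.

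First I would fix a $c_u$-adjacent pair $x \adj_{c_u} y$ in $X$, and let $\gamma \in [0,1)$ be a contraction constant for $f$, so that $d(fx,fy) \le \gamma\, d(x,y)$. Since $X$ is uniformly $c_u$-connected, $d(x,y) = \sqrt{u}$, and hence $d(fx,fy) \le \gamma \sqrt{u} < \sqrt{u}$, using $u \ge 1$ so that $\sqrt{u} > 0$. On the other hand, the adjacency-level characterization of digital continuity quoted in the Preliminaries tells us that $c_u$-continuity of $f$ gives $fx \adjeq_{c_u} fy$, i.e. $fx = fy$ or $fx \adj_{c_u} fy$. The second alternative is impossible: it would force $d(fx,fy) = \sqrt{u}$ by uniform $c_u$-connectedness, contradicting $d(fx,fy) < \sqrt{u}$. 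Hence $fx = fy$.

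Next I would take arbitrary $a,b \in X$. Since $X$ is $c_u$-connected, there is a $c_u$-path $\{x_i\}_{i=0}^{m}$ in $X$ from $a$ to $b$. Applying the previous step to each consecutive pair yields $f(x_i) = f(x_{i+1})$ for $0 \le i < m$, so $f(a) = f(b)$. As $a,b$ were arbitrary, $f$ is a constant map.

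I do not expect a genuine obstacle here; the argument is short, and the only points requiring a little care are the degenerate case $\#X = 1$ (handled trivially) and the decision to invoke the adjacency-level characterization of continuity rather than the path-preserving definition. I would also remark in passing that the contraction hypothesis can be weakened to the condition $d(fx,fy) < d(x,y)$ for all $x \neq y$ without any change to the proof, since only the strict inequality $d(fx,fy) < \sqrt{u}$ for adjacent $x,y$ is used.
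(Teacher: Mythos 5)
Your proof is correct and follows essentially the same route as the paper's: use uniform $c_u$-connectedness plus continuity to force $d(fx,fy) \in \{0,\sqrt{u}\}$ while the contraction condition gives $d(fx,fy) < \sqrt{u}$, so adjacent points have equal images, and then spread the conclusion over $X$ by connectedness (a step the paper leaves implicit with ``the assertion follows''). Your closing remark that only $d(fx,fy) < d(x,y)$ is needed matches how the paper's proof actually argues.
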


\begin{proof}
    Let $x \adj_{c_u} y$ in~$X$.
    Then $fx \adjeq_{c_u} fy$, so
    $d(fx,fy) \in \{0,\sqrt{u} \}$, and
    \[ d(fx,fy) < d(x,y) = \sqrt{u}.
    \]
    Hence $d(fx,fy) = 0$, i.e., $fx=fy$.
    The assertion follows.
\end{proof}

The following is stated as
``Theorem"~5.1 of~\cite{Han19}.

\begin{assert}
    \label{Han5.1}
    Assume a digital image
    $(X,k)$, $X \subset \Z^n$
    and $|X| \ge 2$.
    If $(X,k)$ is uniformly 
    $k$-connected then 
    $C^{\sharp}(X,k) = 1$.
\end{assert}

We noted in
Example~\ref{discontSec}(3--4) that
a digital contraction
map need not be
digitally continuous.
The following is a
counterexample to
Assertion~\ref{Han5.1}.

\begin{exl}
  Let $X$ and~$f$ be as in
    Example~\ref{counterHan5.1}.
    $X$ is easily seen to be
    uniformly $c_5$-connected.
We saw that~$f$ is
    a digital contraction map.
    However, $\#f(X) = 2$.
    Hence $\#C^{\sharp}(X,c_5) > 1$.
\end{exl}

\begin{figure}
    \centering
    \includegraphics{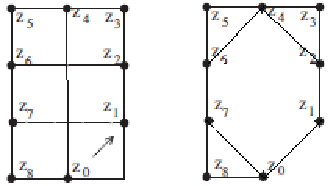}
    \caption{Left: a copy of~\cite{Han19}'s Figure~4(c), incorrectly 
    claimed to show 8-adjacency, i.e., $c_2$ 
    adjacency. Right: a correct showing of $c_2$ 
    adjacency for this set of vertices.}
    \label{fig:ForHanRemark5.2}
\end{figure}
In the following, let 
$X = \{x_i\}_{i=0}^{\ell-1}$ be a
digital simple closed curve,
where the members of~$X$ are
indexed circularly.
We paraphrase ``Theorem"~5.3 
of~\cite{Han19}.

\begin{assert}
\label{Han5.3}
If $(X,c_u)$ is a digital
simple closed curve such that
$C^{\sharp}(X,c_u)=1$
then $(X,c_u)$ is
uniformly $u$-connected.
\end{assert}

Note 
$\{x_0, x_2\} \subset N_{c_u}(X,x_1)$.
Note also that in the
$c_u$ adjacency for $\Z^n$
and the Euclidean metric,
$\sqrt{u}$ is the maximum
distance between adjacent
points.

Han's argument on behalf of
this assertion includes the
following.
Suppose, for a contradiction, 
that $(X,c_u)$ is not uniformly
$c_u$-connected. Then, 
renumbering if necessary,
\[ \sqrt{u} \ge d(x_1,x_2) \ge 
d(x_2,x_3) \ge 1.
\]
Construct $f: X \to X$ such that
\[ f(x_1) = x_2,~~~f(x_2)=f(x_3)=x_3,
\]
and $f(X) \setminus \{x_1, x_2,x_3\}$ 
has the digital
Banach property. Han then
claims that~$f$ is a digital
contraction map. But this
claim is unproven, since,
e.g., we don't know if
$d(fx_0,fx_1)$ is
less than $d(x_0,x_1)$.
Therefore, Assertion~\ref{Han5.3} is
unproven.

\subsection{\cite{Han19}'s
strictly $k$-connectedness}
Definition~9 of~\cite{Han19}
depends ``the
Euclidean distance between"
subsets of~$\Z^n$. However,
Han doesn't tell us what this
distance is. It might be the
Hausdorff distance~\cite{Nadler} based on the Euclidean metric, or 
perhaps Borsuk's metric of 
continuity~\cite{Borsuk}
based on the Euclidean metric,
or any of a variety of other
metrics that have been designed
for such a purpose. Therefore,
Han's ``Definition"~9 is undefined, and the
subsequent assertions that
depend on it (``Theorem"~5.5;
``Example"~5.6; and ``Example"~5.7, for which no
argument is supplied on behalf
of any of its assertions) are
unproven.

\section{\cite{NawazEtal}'s stability
assertions}
The paper~\cite{NawazEtal} claims to
introduce results concerning Hyers-Ulam 
stability~\cite{NawazEtal,Ulam} 
that are related to fixed points 
in digital metric spaces. In fact, the
paper fails to live up to this claim.

Among the shortcomings of~\cite{NawazEtal}:
its Theorem~1.11 is stated with
neither proof nor citation.

We consider \cite{NawazEtal}'s ``Lemma" 2.1 and dependent assertions.

A function $f: X \to Y$ is 
{\em additive} if 
$f(x+y) = f(x) + f(y)$.

``Lemma" 2.1 of~\cite{NawazEtal} states
the following, in which we understand
$(X,d,\kappa)$ and $(G,d,\kappa)$ to
be digital images, with~$d$ as the
Euclidean metric.

\begin{assert}
    \label{Nawaz2.1}
    Assume $f: (X,d,\kappa) \to (G,d,\kappa)$ to be a mapping
    satisfying $f(0)=0$ also {\rm [{\em sic}]}
    \begin{equation}
        \label{Nawaz2.1ineq}
        \begin{array}{c}
        d(f(i+j), f(i) + f(j))) \le 
        d(\rho_1((f(i+j) + f(i-j),2f(i)))) + \\
        d(\rho_2 (2f(\frac{i+j}{2}),
        f(i)+f(j)))) + \varphi(i,j)
        \end{array}
    \end{equation}
    for all $i,j \in (X,d,\kappa)$.
    Then~$f$ is additive.
\end{assert}

Among the shortcomings and questionable
items in this assertion:
\begin{itemize}
    \item There appear to be multiple
    parentheses errors.
    \item It is not clear if we are to
    consider~$X$ and~$G$ as subsets
    of~$\Z$, or as subsets of 
    some~$\Z^n$ in which the 
    operations are vector addition and
    scalar multiplication.
    \item Must $X$ and $G$ use the
          same adjacency?
    \item $\rho_1$ and $\rho_2$ are
          undefined. The ``proof"
          treats them as scalars.
    \item It appears $X$ and $G$ are
        sets on which addition makes
        sense, but this is not clarified
        in the hypotheses.
    \item The ``proof" uses a
    function~$g$ without defining it.
\end{itemize}

Perhaps most important, the assertion
appears to reduce to triviality, as
we see in the following.

\begin{prop}
    Given the probable interpretation
    of Assertion~\ref{Nawaz2.1}, we
    have $X = \{0\}$.
\end{prop}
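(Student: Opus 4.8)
The plan is to take the "probable interpretation" of Assertion~\ref{Nawaz2.1} seriously and show that the hypotheses force $X$ to be a singleton. Under that interpretation, $X \subset \Z^n$, the operations in~(\ref{Nawaz2.1ineq}) are vector addition and scalar multiplication, $d$ is the Euclidean metric, and $\rho_1,\rho_2$ are fixed scalars; the control function $\varphi$ is whatever the paper supplies (presumably with $\varphi(i,j) \to 0$ along some sequence, or simply some fixed nonnegative function). The key observation is that for the expression $f\!\left(\frac{i+j}{2}\right)$ to make sense for \emph{all} $i,j \in X$ — which is what the hypothesis demands — we need $\frac{i+j}{2} \in X$ for all $i,j \in X$, and in particular $X$ must be closed under averaging. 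But $X \subset \Z^n$ is a set of lattice points, so $\frac{i+j}{2} \in \Z^n$ only when $i \equiv j \pmod 2$ coordinatewise. Requiring this for \emph{all} pairs $i,j \in X$ forces all points of $X$ to lie in a common coset of $(2\Z)^n$; then one rescales and repeats, and an iteration of this midpoint-closure argument shows $X$ can contain at most one point.

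The steps, in order: first, spell out the probable interpretation explicitly (domain in $\Z^n$, vector operations, scalar $\rho_i$), so that the reader sees which reading is being refuted. Second, observe that the right-hand side of~(\ref{Nawaz2.1ineq}) contains the term $2f\!\left(\frac{i+j}{2}\right)$, which is only defined when $\frac{i+j}{2}$ lies in the domain $X$; since the inequality is asserted "for all $i,j \in X$," this requires $X$ to be closed under the midpoint operation $(i,j) \mapsto \frac{i+j}{2}$. Third, take any two points $i,j \in X$ with $i \ne j$; midpoint-closure gives an infinite sequence of distinct lattice points $i, \frac{i+j}{2}, \frac{3i+j}{4}, \ldots$ accumulating at... no — better: use the coset argument. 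From $\frac{i+j}{2}\in\Z^n$ for all $i,j\in X$ we get $i-j \in (2\Z)^n$ for all $i,j \in X$; fixing a basepoint $x_0 \in X$ and writing $X = x_0 + 2X'$ with $X' \subset \Z^n$, one checks $X'$ inherits the same midpoint-closure property, hence $X' = x_0' + 2X''$, and so on. If $X$ had two distinct points, their difference would be a nonzero vector in $\bigcap_{k\ge 1}(2^k\Z)^n = \{0\}$, a contradiction. Therefore $\#X = 1$, i.e.\ $X = \{0\}$ once we use $f(0)=0$ (the hypothesis names $0$ as an element, so $X = \{0\}$).

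The main obstacle is not the mathematics — the midpoint/coset iteration is elementary — but rather pinning down "the probable interpretation" in a way that is both faithful to what~\cite{NawazEtal} seems to intend and airtight enough to support the conclusion. In particular one must argue that the appearance of $f\!\left(\frac{i+j}{2}\right)$ is not meant to be read with some tacit restriction (e.g.\ "for all $i,j$ such that $\frac{i+j}{2}\in X$"), since if the authors \emph{had} intended such a restriction the assertion would degenerate differently; the honest reading, given that no such restriction is written and that the "proof" manipulates $f\!\left(\frac{i+j}{2}\right)$ freely, is that the term must be everywhere-defined, which is exactly what collapses $X$. A secondary, minor point is to note that this collapse makes "additive" vacuous, so even granting the interpretation the assertion has no content; I would state this as a closing remark rather than fold it into the proof.
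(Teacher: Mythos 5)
Your proposal is correct and rests on the same key observation as the paper: the term $f\left(\frac{i+j}{2}\right)$, asserted for all $i,j \in X$, forces $X$ to be closed under midpoints, and no nonzero vector of $\Z^n$ is divisible by arbitrarily high powers of $2$. The paper executes this more directly---since $0 \in X$, any $j \in X \setminus \{0\}$ would give $j/2^k \in X \subset \Z^n$ for all $k$, a contradiction---while your coset iteration on differences is a mild repackaging of the same argument.
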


\begin{proof}
    By assumption, $0 \in X$. Suppose
    there exists $j \in X \setminus \{0\}$. By assumption,
    $\frac{0+j}{2} = j/2 \in X$, and
    by a simple induction, 
    $0 \neq j/(2^k) \in X$ for all $k \in \N$.
    But this is impossible, since
    as a digital image, $X$ is a subset
    of some~$\Z^n$. The contradiction
    establishes our assertion.
\end{proof}

``Corollary" 2.3 of~\cite{NawazEtal}
repeats several of the errors
of Assertion~\ref{Nawaz2.1} and
similarly reduces to triviality.

``Corollary" 2.5, ``Corollary" 3.2,
and ``Corollary" 3.4
of~\cite{NawazEtal}
rely on the hypotheses of
``Corollary" 2.3 of~\cite{NawazEtal},
hence similarly reduce to triviality.

We conclude that~\cite{NawazEtal}
contributes nothing to our knowledge
of Hyers-Ulam stability in digital
metric spaces.

\section{\cite{ParkEtAl,OkAk}'s contraction mappings}
Let $(X,d)$ be a metric space and let $f: X \to X$. Then $f$
is a {\em Kannan contraction map} if for some
$k$ satisfying $0 < k < 1/2$ and all $x,y \in X$,
\[ d(f(x), f(y)) \le k[d(x,f(x)) + d(y,f(y))].
\]

\subsection{\cite{ParkEtAl}'s Kannan contraction maps}
The paper~\cite{ParkEtAl} was previously discussed
in~\cite{BxSt19}, where some limitations on digital
Kannan contraction mappings were observed; however, an incorrect
proof in~\cite{ParkEtAl} 
was overlooked in~\cite{BxSt19}. 
Here we give a correct proof for
a somewhat more general assertion. 

Stated as Theorem~3.1 of~\cite{ParkEtAl} is the following, which we have paraphrased.

\begin{thm}
\label{ParkEtAl3.1}
    Let $(X,d,\kappa)$ be a digital metric space, where~$d$ is the 
    Euclidean metric. Let 
    $S: X \to X$ be a
    Kannan contraction map.
    Then $S$ has a unique fixed point in $X$.
\end{thm}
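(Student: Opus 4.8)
The plan is to run the classical Picard iteration for Kannan maps, using the special structure of a digital metric space with the Euclidean metric to supply completeness for free. First I would observe that, since $d$ is the Euclidean metric on $\Z^n$, it is in particular an $\ell_p$ metric, so by Remark~\ref{unifDiscrete} the metric space $(X,d)$ is uniformly discrete; hence by Proposition~\ref{eventuallyConst} every Cauchy sequence in $X$ is eventually constant and $(X,d)$ is complete. This is the only place the ``digital'' hypothesis enters, and it replaces the completeness hypothesis that~\cite{ParkEtAl} either omits or leaves unjustified.

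Next I would fix any $x_0 \in X$ (reading ``digital image'' as $X \neq \emptyset$, so that $x_0$ exists) and set $x_{n+1} = S(x_n)$ for $n \ge 0$. Applying the Kannan inequality to the pair $(x_n, x_{n-1})$ gives
\[ d(x_{n+1},x_n) = d(Sx_n, Sx_{n-1}) \le k[\, d(x_n, Sx_n) + d(x_{n-1}, Sx_{n-1}) \,] = k\,d(x_{n+1},x_n) + k\,d(x_n,x_{n-1}), \]
so $d(x_{n+1},x_n) \le h\, d(x_n,x_{n-1})$ where $h = k/(1-k)$; since $0 < k < 1/2$ we have $0 < h < 1$, and therefore $d(x_{n+1},x_n) \le h^n d(x_1,x_0)$. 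The usual geometric-series estimate then shows $\{x_n\}_{n=0}^{\infty}$ is Cauchy. By the first step this sequence is eventually constant, say $x_n = x'$ for all $n \ge N$, and then $S(x') = S(x_N) = x_{N+1} = x'$, so $x'$ is a fixed point of $S$. I would emphasize that, unlike the textbook argument, no appeal to continuity of $S$ is needed here, since eventual constancy yields the fixed point directly; this matters because, as noted in Section~\ref{discontSec}, a digital contraction-type map need not be digitally continuous, and the same is true for Kannan maps.

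For uniqueness, if $Sx' = x'$ and $Sy' = y'$ then $d(x',y') = d(Sx',Sy') \le k[d(x',x') + d(y',y')] = 0$, so $x' = y'$.

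The main obstacle, to the extent there is one, is conceptual rather than technical: the substantive content is entirely contained in Remark~\ref{unifDiscrete} and Proposition~\ref{eventuallyConst}, which are already available, and the point is to recognize that once completeness is granted the Kannan iteration goes through verbatim without any assumption of continuity, finiteness, or connectedness of $X$. The one thing worth checking carefully is that the Cauchy/eventually-constant step really does replace the limit-plus-continuity argument, so that the proof does not implicitly reintroduce the (false) assumption that $S$ is continuous.
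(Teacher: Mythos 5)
Your proposal is correct and follows essentially the same route as the paper: the paper proves the slightly more general Theorem~\ref{ParkEtAl3.1Corrected} for a uniformly discrete metric by running the same Kannan--Picard iteration, obtaining a Cauchy sequence, and replacing the erroneous continuity argument of~\cite{ParkEtAl} with eventual constancy via Proposition~\ref{eventuallyConst}, with the identical uniqueness computation. Your explicit use of Remark~\ref{unifDiscrete} to pass from the Euclidean metric to uniform discreteness is exactly how the stated theorem follows from that generalization.
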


This assertion is correct. However,
the argument given for this assertion in~\cite{ParkEtAl}
has an error: it assumes that
its Kannan contraction map is
$\kappa$-continuous, despite the following.

\begin{remarks}
\label{KannanRemark}
    It was shown at Example~4.1
of~\cite{BxSt19} (parts of which we 
restated at our Example~\ref{counterHan5.1})
that a Kannan contraction map
need not be digitally
continuous.
\end{remarks}

We generalize Theorem~\ref{ParkEtAl3.1}
slightly by assuming a uniformly
discrete metric.

\begin{thm}
    \label{ParkEtAl3.1Corrected}
     Let $(X,d)$ be a uniformly discrete digital 
     metric space. Let $S: X \to X$ such that for some
     $\alpha$ satisfying $0 < \alpha < 1/2$ and all $x,y \in X$,
    \[ d(Sx, Sy) \le \alpha [d(x,Sx) + d(y,Sy)]
    \]
   (i.e.,~$S$ is a Kannan
   contraction map).
   Then $S$ has a unique fixed point in $X$.
\end{thm}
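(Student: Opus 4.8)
The plan is to mimic the classical Banach-style argument, but to exploit uniform discreteness to force the Cauchy sequence of iterates to be eventually constant rather than merely convergent. First I would fix any basepoint $x_0 \in X$ and set $x_{n+1} = Sx_n$. Applying the Kannan inequality to the consecutive pair $x_{n-1}, x_n$ gives
\[
d(x_n, x_{n+1}) = d(Sx_{n-1}, Sx_n) \le \alpha\big[d(x_{n-1}, Sx_{n-1}) + d(x_n, Sx_n)\big] = \alpha\big[d(x_{n-1},x_n) + d(x_n,x_{n+1})\big],
\]
so $d(x_n,x_{n+1}) \le \frac{\alpha}{1-\alpha} d(x_{n-1},x_n)$. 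Since $0 < \alpha < 1/2$, the ratio $q := \alpha/(1-\alpha)$ satisfies $0 \le q < 1$, hence $d(x_n,x_{n+1}) \le q^n d(x_0,x_1) \to 0$, and a standard triangle-inequality/geometric-series estimate shows $\{x_n\}$ is Cauchy.

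Next I would invoke uniform discreteness: since $(X,d)$ is uniformly discrete, Proposition~\ref{eventuallyConst} tells us every Cauchy sequence in $X$ is eventually constant. So there is $m$ with $x_n = x_m$ for all $n \ge m$; in particular $x_{m+1} = x_m$, i.e.\ $Sx_m = x_m$, giving a fixed point $x' := x_m$. For uniqueness, suppose $Sx' = x'$ and $Sy' = y'$. Then the Kannan inequality yields $d(x',y') = d(Sx',Sy') \le \alpha[d(x',Sx') + d(y',Sy')] = \alpha[0 + 0] = 0$, so $x' = y'$. (Note this uniqueness argument is purely algebraic and needs neither discreteness nor completeness.)

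The only real obstacle is the same subtlety that the author flags as the flaw in~\cite{ParkEtAl}: one must \emph{not} assume $S$ is $\kappa$-continuous (Remark~\ref{KannanRemark} shows it need not be), so the argument must run entirely through the metric $d$ and the contraction-type inequality, never through the graph structure $\kappa$. The above plan does exactly that — the adjacency $\kappa$ plays no role whatsoever, and uniform discreteness (which by Remark~\ref{unifDiscrete} holds automatically for finite $X$ or for $\ell_p$ or shortest-path metrics, covering the Euclidean case of Theorem~\ref{ParkEtAl3.1}) is precisely what replaces the missing continuity. A minor care point: one should check the estimate $d(x_n,x_{n+1}) \le q^n d(x_0,x_1)$ is vacuously fine when $x_0 = x_1$, and that the Cauchy estimate $d(x_n, x_{n+k}) \le \frac{q^n}{1-q} d(x_0,x_1)$ is assembled correctly — but these are routine.
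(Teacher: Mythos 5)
Your proposal is correct and follows essentially the same route as the paper's proof: iterate $S$, use the Kannan inequality to get the ratio $\alpha/(1-\alpha)<1$ and a Cauchy sequence, then invoke uniform discreteness via Proposition~\ref{eventuallyConst} to conclude the iterates are eventually constant (replacing the erroneous continuity appeal in~\cite{ParkEtAl}), with the same purely algebraic uniqueness argument. No gaps to report.
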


\begin{proof}
    We will build on the argument given in~\cite{ParkEtAl}, modifying
    as necessary.

    Let $x_0 \in X$. Consider the sequence $x_{n+1} = Sx_n$, $n \ge 0$.
    Then for $n>1$,
    \[ d(x_n,x_{n+1}) = d(Sx_{n-1},Sx_n) \le 
       \alpha [d(x_{n-1},Sx_{n-1}) + d(x_n,Sx_n)] 
       \]
    \[ = \alpha[d(x_{n-1}, x_n) +  d(x_n, x_{n+1})]~~~\mbox{or}
    \]
    \[ d(x_n,x_{n+1}) \le \frac{\alpha}{1 - \alpha} d(x_{n-1}, x_n).
    \]

    Let $\beta = \frac{\alpha}{1 - \alpha} < 1$. An easy
    induction yields $d(x_n,x_{n+1}) \le \beta^n d(x_0,x_1)$ and
    therefore
    \[ d(x_n,x_{n+k}) \le \mbox{ (by triangle inequality) }
       \sum_{j=1}^k d(x_{n+j-1},x_{n+j}) \le 
       \]
       \[
        \sum_{j=1}^k \beta^{n+j-1} d(x_0,x_1) \le
        \frac{\beta^{n+k}}{1 - \beta} \, d(x_0,x_1) \to_{n \to \infty} 0.
    \]
    Thus, the sequence $x_n$ is a Cauchy sequence.
    
    At this point, \cite{ParkEtAl} reasons 
    incorrectly, claiming 
    $\{x_n\}$ converges to a
    limit~$v$ by virtue of an alleged $(\kappa,\kappa)$-continuity of~$S$.
    We noted in Remark~\ref{KannanRemark} that
    ~$S$ need not be $(\kappa,\kappa)$-continuous. Instead, we reason that
    since $(X,d)$ is uniformly discrete, 
    Proposition~\ref{eventuallyConst} implies that
    $\{x_n\}$ is eventually constant, so 
    $x_n = x_{n+1} = Sx_n$ is a fixed point for almost
    all~$n$.

We argue as in~\cite{ParkEtAl} 
to prove the uniqueness
claim. If $a$ and $b$ are fixed points of $S$, then
\[ d(a,b) = d(Sa,Sb) \le \alpha [d(a,Sa) + d(b,Sb)] = \alpha (0+0) = 0
\]
so $a=b$.
\end{proof}

\subsection{\cite{OkAk}'s citations}
The paper~\cite{OkAk} attempts to prove fixed point
assertions for bicommutative Kannan mappings and for 
functions similar to Reich contraction mappings.
We show that the arguments offered as proofs have many errors; that in some cases,
stronger results exist in the literature or can be proved
briefly by using the 
literature; and that some
important cases reduce 
to triviality.

The paper is further marred by improper citations, including
\[
\begin{array}{lll}
\mbox{\underline{Statement in~\cite{OkAk}}}     & 
\mbox{\underline{\cite{OkAk}'s attribution}} & 
\mbox{\underline{Should be}}  \\
     \mbox{Definition~1.1 - digital image} & \mbox{\cite{KalJain}} & \mbox{\cite{Rosenfeld}} \\
     \mbox{Definition~1.2 - digital metric space} & \mbox{\cite{KalJain}} & \mbox{\cite{EgeKaraca-Ban}} \\
     \mbox{Lipschitz condition} & \mbox{\cite{Indrati}} & \mbox{classic} \\
     \mbox{digital contraction mapping} & \mbox{\cite{RaniEtAl}} & 
     \mbox{\cite{EgeKaraca-Ban}} \\
     \mbox{Theorem~2.1 - digital Kannan contraction} & \mbox{\cite{Adey}} &
     \mbox{\cite{ParkEtAl}}
\end{array}
\]

\subsection{\cite{OkAk}'s Theorem 3.1}
Upon correction for missing parentheses and using abbreviated language,
the following is stated as ``Theorem"~3.1 of~\cite{OkAk}.

\begin{assert}
\label{OkAk3.1}
Let $(M, \rho, \mu)$ be a digital metric space. Suppose
$\Psi, \phi: M \to M$, $0 < g < 1/2$, such that for all
$u,v \in M$,
\begin{equation} 
\label{Kannen-Psi}
\rho(\Psi(u), \Psi(v)) \le g[\rho(u, \Psi(u)) + \rho(v, \Psi(v))]
\end{equation}
\begin{equation}
\label{Kannen-phi}
\rho(\phi(u), \phi(v)) \le g[\rho(u, \phi(u)) + \rho(v, \phi(v))]
\end{equation}
\[ \Psi(u) \subset \phi(u) \mbox{  {\rm [}sic - perhaps this is intended to say 
$\Psi(M) \subset \phi(M)${\rm ]} },
 \]
 and
 $\Psi$ and $\phi$ commute on $M$. Then $\Psi$ and $\phi$
 have a unique common fixed point.
\end{assert}

The ``proof" of this assertion in~\cite{OkAk} has many errors, including
the following.
\begin{itemize}
    \item The 3rd line of the ``proof" claims, with no obvious
    justification, that $\rho(\Psi(u),\Psi(v)) \le \rho(\phi(u), \phi(v))$.
    In subsequent lines, the authors claim to derive this inequality,
    again without obvious justification; a dubious step being the
    unjustified claim that
    \[ g[\rho(u, \Psi(u)) + \rho(v, \Psi(v))] \le
    \rho(\phi(u),\phi(v))
    \]
    \item For $u_0 \in M$, a sequence is defined by
          $\Psi(u_{n+1}) = \phi(u_0)$. It seems likely that the right
          side should have some expression in $u_n$.
    \item The next line of the proof claims $\Psi(\Psi(u_n))=\Psi(u_{n+1})$.
          Perhaps this would follow from a correct definition of
          $u_n$, but as written, the claim is unjustified.
    \item At the last line of page 241, it is claimed that $\Psi$ and
    $\phi$ are $(u,u)$-continuous. No notion of $(u,u)$-continuity has
    been defined. If the authors intend $(\mu,\mu)$ continuity, this is
    unjustified. 
    \begin{itemize}
        \item The authors may be assuming continuity with respect to
              the metric~$\rho$, but that would establish nothing:
              $M \subset \Z^n$ for some $n$ is a uniformly discrete
              topological space with respect to standard metrics, so
              any $f: M \to M$ has such continuity without implying
              a desired convergence.
        \item The authors may be assuming digital continuity, but that
              would be incorrect, as 
              Remark~\ref{KannanRemark}
              notes that a Kannan contraction mapping need not be
              digitally continuous.
    \end{itemize}
\end{itemize}
Confusion propagates from these errors. Thus we must assume that
Assertion~\ref{OkAk3.1} is not proven in~\cite{OkAk}.

\subsection{\cite{OkAk}'s Reich contraction and related maps}
In real analysis, we have
the following.

\begin{definition}
\label{realReich}
    {\rm \cite{Reich}}
    Let $(X,d)$ be a complete
    metric space. Let 
    $T: X \to X$ be a function
    such that for all 
    $x,y \in X$ and for some 
    nonnegative constants 
    $a,b,c$ such that
    $a+b+c < 1$,    \begin{equation}
    \label{ReichOrig}
    d(Tx,Ty) \le ad(x,Tx) +
    bd(y,Ty) + cd(x,y)    
    \end{equation}
    Then $T$ is a
{\em Reich contraction mapping}.
\end{definition}

Unfortunately, the notion of
a Reich contraction was 
imported into digital topology
with a different quantifier for the
coefficients $a,b,c$:

\begin{definition}
\label{ReichDef}
{\rm (See Theorem~3.3 of~\cite{ParkEtAl}.)} If $f: X \to X$ for a digital 
metric space $(X, d, \kappa)$, 
where~$d$ is the Euclidean metric, and
\begin{equation}
\label{ReichIneq}
    d(f(x),f(y)) \le ad(x, f(x)) + bd(y, f(y)) + cd(x, y)
\end{equation}
          for all $x,y \in X$ and all nonnegative $a,b,c$ such that
          $a+b+c < 1$, then $f$ is a {\em digital Reich contraction map}.
\end{definition}
Shortcomings of the latter
version were discussed
in~\cite{BxSt19}. 
In this section:
\begin{itemize}
    \item We show that
the notion of a Reich 
contraction map in the sense of
Definition~\ref{ReichDef}
is trivial (see
Proposition~\ref{ReichTriv}).
\item We show that
\cite{OkAk}'s attempts to
deal with maps that seem
inspired by Definition~\ref{ReichDef}
were not very successful.
\end{itemize}

The following is a modest
improvement on Theorem~4.6
of~\cite{BxSt19}.

\begin{prop}
    \label{ReichTriv}
    Let $(X,d,\kappa)$ be a digital metric space. A
    function $f: X \to X$ is a Reich contraction map 
    in the sense of Definition~\ref{ReichDef} if 
    and only if~$f$ is a constant map.
\end{prop}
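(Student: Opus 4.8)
The plan is to prove both directions, with the ``only if'' direction being the substantive one. The ``if'' direction is immediate: if $f$ is constant, say $f(x) = x_0$ for all $x \in X$, then $d(f(x),f(y)) = 0$, and since $d(x,f(x)) \ge 0$, $d(y,f(y)) \ge 0$, $d(x,y) \ge 0$ with nonnegative coefficients, inequality~(\ref{ReichIneq}) holds trivially for \emph{any} nonnegative $a,b,c$ with $a+b+c<1$; so $f$ is a Reich contraction map in the sense of Definition~\ref{ReichDef}.

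For the ``only if'' direction, the key observation is the quantifier placement in Definition~\ref{ReichDef}: inequality~(\ref{ReichIneq}) is required to hold \emph{for all} nonnegative $a,b,c$ with $a+b+c<1$, not merely for some such triple. So first I would fix an arbitrary pair $x,y \in X$ and specialize the coefficients to make the right-hand side as small as possible. Taking $a = b = c = 0$ (which satisfies $a+b+c = 0 < 1$) forces
\[
d(f(x),f(y)) \le 0,
\]
hence $d(f(x),f(y)) = 0$, i.e. $f(x) = f(y)$. Since $x,y \in X$ were arbitrary, $f$ is constant. (Alternatively, letting $a,b,c \to 0^+$ through positive values gives the same conclusion by continuity; but the degenerate choice $a=b=c=0$ is cleanest, since ``nonnegative'' admits zero.)

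There is essentially no obstacle here — the result is, as the surrounding text says, a triviality exposing the defect in Definition~\ref{ReichDef}. The only point requiring a word of care is confirming that the all-zero triple is permitted by the definition's phrasing ``all nonnegative $a,b,c$ such that $a+b+c<1$'': it is, since $0$ is nonnegative and $0+0+0 = 0 < 1$. One might remark that even restricting to strictly positive coefficients the conclusion survives, since one can take $a=b=c = \varepsilon$ for arbitrarily small $\varepsilon>0$ and obtain $d(f(x),f(y)) \le \varepsilon\bigl(d(x,f(x))+d(y,f(y))+d(x,y)\bigr)$ for all $\varepsilon>0$, forcing $d(f(x),f(y))=0$ again. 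Either way the argument is a one-line specialization of the universally quantified hypothesis, and it is worth contrasting this explicitly with the real-analytic Definition~\ref{realReich}, where $a,b,c$ are fixed \emph{before} the inequality is imposed, so that no such collapse occurs.
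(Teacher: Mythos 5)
Your proof is correct and rests on the same core idea as the paper's: exploit the fact that Definition~\ref{ReichDef} quantifies the inequality~(\ref{ReichIneq}) over \emph{all} admissible coefficients, and specialize them to force $d(f(x),f(y))=0$. The execution differs slightly, to your advantage: the paper sets $a=b=0$ and takes $c$ arbitrarily small, prefacing this with ``Since $X$ is finite, $d$ is uniformly discrete'' --- an appeal to finiteness that is not among the stated hypotheses and is in any case unnecessary, since $d(f(x),f(y)) \le c\,d(x,y)$ for arbitrarily small $c>0$ already gives $d(f(x),f(y))=0$. Your choice $a=b=c=0$ (legitimate, as the definition only requires nonnegativity and $a+b+c<1$) collapses the argument to a single line and needs no discreteness or finiteness at all; your $\varepsilon$-remark covers the strictly-positive case as well. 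Your handling of the ``if'' direction, which the paper leaves to the reader, and your closing contrast with Definition~\ref{realReich} (coefficients fixed before the inequality is imposed) are both apt.
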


\begin{proof}
It is elementary, and left to the reader to show,
that a constant~$f$ is a Reich contraction map 
    in the sense of Definition~\ref{ReichDef}.
    
Since $X$ is finite,~$d$ is uniformly discrete.
We can take $a,b,c$ of~(\ref{ReichIneq})
such that $a=b=0$, giving
\[ d(fx,fy) < c d(x,y)
\]
where~$c$ is arbitrarily small.
Hence $fx=fy$. The assertion follows.
\end{proof}

By contrast, if we apply an obvious application of
Definition~\ref{realReich}
to digital images, we see 
as follows that such a map
need not be constant.

\begin{exl}
    Let $X$ and $f$ be as
    in Example~\ref{counterHan5.1}.
    Then $f$ is a 
    non-constant function
    that, by~(\ref{BxStExlIneq})
    satisfies the
    quantifiers of
    Definition~\ref{realReich}) with
    $a=b=0$,
    $c=2/\sqrt{5}$.
\end{exl}

Stated as Theorem~3.2 of~\cite{OkAk} is the following (paraphrased for
brevity and greater clarity).

\begin{assert}
\label{OkAk3.2}
    Let $(M,\rho,u)$ be a digital metric space. Let
    $\Psi, \phi: M \to M$ be maps
    such that, for all nonnegative
    $a,b,c$ such that $a+b+c < 1$
    and all $u,v \in M$,
    \begin{equation} 
    \label{OK3.2ineq1}
    \rho(\Psi(u),\Psi(v)) \le
    a \rho(\Psi(u),\Psi(\Psi(u))) +
    b \rho(\Psi(v),\Psi(\Psi(v))) +
    c \rho(\Psi(u), \Psi(v))
    \end{equation}
    and
    \[
    \rho(\phi(u),\phi(v)) \le 
    \] 
    \[ a \rho(\phi(u),\phi(\phi(v))) \mbox{ {\em ([sic]} - 
    perhaps the 2nd parameter should be 
    $\phi(\phi(u))$}) 
    \]
   \begin{equation}
    \label{OK3.2ineq2}
     + ~b \rho(\phi(v),\phi(\phi(v))) +
    c \rho(\phi(u), \phi(v))
    \end{equation}
    If $\Psi$ and $\phi$ commute, then they have a common fixed point.
\end{assert}

The argument offered in~\cite{OkAk} has multiple errors,
including the following.
\begin{itemize}
    \item The hypotheses don't
    match the hypotheses of 
    the paper's Theorem~2.3,
    desired by the authors for
    their argument.
    \item The symbol $u$ appears as both the
          name of the adjacency on $M$
          and as a variable for points of~$M$.
    \item In the second line of the ``proof", it is claimed that
    $\Psi(u) \subset \phi(u)$. This makes no sense.
    The authors seem to intend to use 
    the paper's Theorem~2.3, where the same
    error appears, as~\cite{OkAk} misquotes
    Theorem~3.1.4 of~\cite{RaniEtAl}.
    At any rate, Assertion~\ref{OkAk3.2} has 
    no stated containment hypothesis, nor is
    one proven in~\cite{OkAk}.
    \item The next line of the ``proof" claims that
    $\rho(\Psi(u), \Psi(v)) \le \rho(\phi(u), \phi(v))$,
    citing the paper's Theorem~2.3. Since the containment mentioned 
    above, required by a correctly
    quoted version of the paper's
    Theorem~2.3, has not been justified,
    this inequality is not justified.
    \item At the top of page 242, the authors claim to deduce that
    $\Psi(u) \le \phi(u)$. Since we have not assumed that~$M$ is a
    subset of~$\Z$, we are left with the question of what definition
    of ``$\le$" is employed. It seems likely that this statement is
    not what the authors intended.
\end{itemize}

We note this line of investigation reduces
to triviality (whatever
Assertion~\ref{OkAk3.2} might reasonably be
modified to say). Somewhat
similar to Proposition~\ref{ReichTriv},
we have the following.

\begin{prop}
    \label{nonReichTriv}
    Let $(X,d)$ be a 
    metric space. Let
    $F: X \to X$ be a map 
    satisfying ~(\ref{OK3.2ineq1})
    or~(\ref{OK3.2ineq2}). 
   Then~$F$ is a constant function.
\end{prop}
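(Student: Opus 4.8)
The plan is to reuse, essentially verbatim, the mechanism behind Proposition~\ref{ReichTriv}. The decisive feature of inequalities~(\ref{OK3.2ineq1}) and~(\ref{OK3.2ineq2}) is that --- just as in Definition~\ref{ReichDef} --- they are required to hold \emph{for every} triple of nonnegative coefficients $a,b,c$ with $a+b+c<1$, not merely for some such triple. Exploiting this universal quantifier over the coefficients collapses the condition to near-triviality, and, in contrast with Proposition~\ref{ReichTriv}, no finiteness or uniform discreteness of $(X,d)$ is needed.

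Here is the sequence of steps. First I would fix the translation: saying that $F\colon X\to X$ ``satisfies~(\ref{OK3.2ineq1})'' means that, replacing $\rho$ by $d$ and $\Psi$ by $F$, we have $d(F(u),F(v)) \le a\,d(F(u),F(F(u))) + b\,d(F(v),F(F(v))) + c\,d(F(u),F(v))$ for all admissible $a,b,c$ and all $u,v\in X$ (and analogously for~(\ref{OK3.2ineq2}) with $\phi$ replaced by $F$). Second, I would specialize the coefficients to $a=b=0$ and some fixed $c$ with $0\le c<1$ (say $c=0$, or $c=\tfrac12$); this is legitimate since $a+b+c=c<1$. The first two summands on the right vanish, leaving $d(F(u),F(v)) \le c\,d(F(u),F(v))$, and since $c<1$ this forces $d(F(u),F(v))=0$, i.e.\ $F(u)=F(v)$. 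Third, since $u,v\in X$ were arbitrary, $F$ is constant. The case of~(\ref{OK3.2ineq2}) is handled identically: the only difference from~(\ref{OK3.2ineq1}) is the argument inside the first summand (it reads $a\,d(F(u),F(F(v)))$, or the corrected $a\,d(F(u),F(F(u)))$), but that summand is multiplied by $a=0$ in the chosen specialization and therefore plays no role, so the same computation gives $F(u)=F(v)$ for all $u,v\in X$.

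I do not expect a real obstacle: the proof is a one-line specialization once the quantifier is read correctly, and the only subtlety is interpretive. It is worth flagging in the write-up the sharp contrast with the genuine Reich condition of Definition~\ref{realReich}, for which, by the example immediately preceding, such maps need \emph{not} be constant; the triviality here is entirely an artifact of the misplaced ``for all $a,b,c$'' quantifier that was imported into the digital setting. I would also note explicitly that, unlike Proposition~\ref{ReichTriv}, the present statement requires nothing of $(X,d)$ beyond being a metric space, precisely because the coefficient quantifier alone does all the work.
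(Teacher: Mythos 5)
Your proposal is correct and matches the paper's own proof: both specialize the universally quantified coefficients to $a=b=0$, reduce~(\ref{OK3.2ineq1}) or~(\ref{OK3.2ineq2}) to $d(Fu,Fv) \le c\,d(Fu,Fv)$ with $c<1$, and conclude $Fu=Fv$ for all $u,v$. Your added remarks on the contrast with Definition~\ref{realReich} and the absence of any discreteness hypothesis are accurate but not needed for the argument.
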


\begin{proof}
    Without loss of generality, $\#X > 1$. 
    
    The inequalities 
    ~(\ref{OK3.2ineq1}) and
    ~(\ref{OK3.2ineq2})
    are hypothesized
    for all nonnegative $a,b,c$ such that
    $a+b+c < 1$. Suppose we take $a=b=0$.
   For $F$ satisfying either of ~(\ref{OK3.2ineq1}) or
    ~(\ref{OK3.2ineq2}),
  for all $u,v \in X$, 
   \[ d(Fu,Fv) \le c d(Fu,Fv)
   \]
Since $c < 1$, $Fu=Fv$, so $F$ is constant.
\end{proof}

\section{\cite{Saluja}'s weakly commutative mappings}
Saluja's \cite{Saluja} gives a common fixed point 
theorem for weakly commutative mappings on a digital
metric space, although its proof is badly written.
In this section, we give a slight generalization
of Saluja's theorem.

\begin{definition}
    {\rm \cite{RaniEtAl}} Let $(X,d,\kappa)$ be a digital metric space.
    Let $S,T: X \to X$ such that
    \begin{equation}
        \label{weaklyCommutIneq}
        d(S(T(x)),T(X(x))) \le d(S(x),T(x)) \mbox{  for all  } x \in X.
    \end{equation}
    Then $S$ and $T$ are {\em weakly commutative} mappings.
\end{definition}

The following is Theorem~3 of~\cite{Saluja}.

\begin{thm}
    \label{SalujaThm3}
    Let $(F,d, Y)$ be a digital metric space, where $d$ is the
    Euclidean metric. Let $J,K: F \to F$ such that
    \begin{itemize}
        \item $J(F) \subset K(F)$;
        \item $K$ is continuous;
        \item $J$ and $K$ are weakly commutative; and
        
    \end{itemize}
    and 
\begin{equation}
\label{SalIneq}
     d(Ju, Jq) \le \xi_1 d(Ku,Kq) + \xi_2 d(Ku,Ju) +
     \xi_3 d(Kq,Jq) \mbox{ for all }
              q,u \in X
\end{equation}
where  $\xi_1, \xi_2, \xi_3 \ge 0$ and
$\xi_1 + \xi_2 + \xi_3 < 1$. Then $J$ and $K$ have a unique common fixed point.
\end{thm}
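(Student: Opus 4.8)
The plan is to follow the now-standard pattern used throughout this paper: replace the ``$K$ is continuous'' hypothesis (which, as Remark~\ref{KannanRemark}-style examples show, is both doubtful and unnecessary) with the observation that the underlying metric space is uniformly discrete, and then run a Picard-type iteration. First I would remark that since $F\subset\Z^n$ and $d$ is the Euclidean metric, $(F,d)$ is uniformly discrete by Remark~\ref{unifDiscrete}, hence complete by Proposition~\ref{eventuallyConst}; so any Cauchy sequence I construct will be eventually constant. I would also note the inconsistency between $F$ and the $X$ appearing in~(\ref{SalIneq}), and simply take the inequality to hold for all $q,u\in F$.

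Next I would set up the iteration. Pick $u_0\in F$. Using $J(F)\subset K(F)$, choose $u_1$ with $Ku_1=Ju_0$, then $u_2$ with $Ku_2=Ju_1$, and in general $Ku_{n+1}=Ju_n$. Writing $y_n=Ju_n=Ku_{n+1}$, I would plug $u=u_n$, $q=u_{n+1}$ into~(\ref{SalIneq}) to get
\[
d(y_n,y_{n+1})=d(Ju_n,Ju_{n+1})\le \xi_1 d(y_{n-1},y_n)+\xi_2 d(y_{n-1},y_n)+\xi_3 d(y_n,y_{n+1}),
\]
which rearranges to $d(y_n,y_{n+1})\le \lambda\, d(y_{n-1},y_n)$ with $\lambda=(\xi_1+\xi_2)/(1-\xi_3)<1$. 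An easy induction and the triangle inequality, exactly as in the proof of Theorem~\ref{ParkEtAl3.1Corrected}, show $\{y_n\}$ is Cauchy, hence eventually constant: there is $N$ with $y_n=y_{N}$ for all $n\ge N$. Call this common value $z$; then $Ku_{n+1}=z=Ju_n$ for all large $n$, so in particular $Jz'=Kz'=z$ where $z'=u_{n}$ for such an $n$; more carefully, from $Ku_{n+1}=Ju_{n+1}=z$ eventually, $z$ lies in the eventual range of both $J$ and $K$ along the sequence.

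The delicate part — and the step I expect to be the main obstacle — is upgrading ``the iterates stabilize at a value $z$ with $Ju_m=Ku_m$ eventually'' to an honest common fixed point of $J$ and $K$. This is where weak commutativity~(\ref{weaklyCommutIneq}) must enter: from $Ju_m=Ku_m$ (for $m$ large) I get $d(JKu_m,KJu_m)\le d(Ju_m,Ku_m)=0$, so $JKu_m=KJu_m$, i.e.\ $J$ and $K$ agree when composed at $u_m$; combining $Jz_0=Kz_0$ (set $z_0=u_m$) with $JKz_0=KJz_0$ gives $J(Jz_0)=K(Jz_0)$, and then feeding the point $Jz_0$ back into~(\ref{SalIneq}) against $z_0$, together with $Jz_0=Kz_0=z$ and $JJz_0=KJz_0$, forces $d(z,Jz)=d(Jz_0,JJz_0)\le(\xi_1+\xi_2+\xi_3)\,d(z,Jz)$, hence $Jz=z$; then $Kz=KJz_0=JKz_0=Jz=z$, so $z$ is a common fixed point. (The bookkeeping here must be done carefully, since continuity of $K$ is no longer available to pass limits through $K$; the uniform-discreteness/eventual-constancy is what lets me avoid it.) Finally, uniqueness: if $z,w$ are both common fixed points, then $d(z,w)=d(Jz,Jw)\le \xi_1 d(Kz,Kw)+\xi_2 d(Kz,Jz)+\xi_3 d(Kw,Jw)=\xi_1 d(z,w)$, and $\xi_1<1$ gives $z=w$. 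I would present the argument so that it visibly generalizes Theorem~\ref{SalujaThm3} by dropping ``$K$ continuous'' and only using that $F$ is finite (equivalently, uniformly discrete).
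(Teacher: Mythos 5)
Your proposal is correct and takes essentially the same route as the paper, which handles this statement via its generalization Theorem~\ref{improvedSaluja}: drop the continuity of $K$, use uniform discreteness (Remark~\ref{unifDiscrete}, Proposition~\ref{eventuallyConst}) so the Picard-type sequence defined by $Ku_{n+1}=Ju_n$ is eventually constant, apply weak commutativity at the stabilized points, and obtain uniqueness directly from~(\ref{SalIneq}). The only cosmetic differences are your contraction ratio $(\xi_1+\xi_2)/(1-\xi_3)$ versus the paper's $(\xi_1+\xi_3)/(1-\xi_2)$, and that you show the limit value $z$ itself is the common fixed point using a single application of weak commutativity, whereas the paper applies it twice to show $Ja$ is the common fixed point.
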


The proof given in~\cite{Saluja} has some unnecessary 
material such as multiple multi-line arguments
to show assertions of the form
$d(x, x)=0$; and has correct assertions
that seem inadequately explained. Indeed, the
theorem can be generalized as 
Theorem~\ref{improvedSaluja} below, in which we
assume $d$ is a uniformly discrete metric on $X$ 
(a generalization of the Euclidean metric); and we omit
the continuity hypothesis of Theorem~\ref{SalujaThm3}.

\begin{thm}
    \label{improvedSaluja}
     Let $(X,d)$ be a metric space, where $d$ is a
    uniformly discrete metric on $X$. Let $J,K: X \to X$ such that
    $J(F) \subset K(F)$, $J$ and~$K$ are weakly
    commutative, and~(\ref{SalIneq}) holds.
      Then $J$ and $K$ have a unique common fixed point.
\end{thm}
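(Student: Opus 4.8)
The plan is to mimic the standard Jungck-type common fixed point argument, but to exploit uniform discreteness of $d$ in place of the usual continuity and completeness hypotheses. First I would pick any $u_0 \in X$ and use the containment $J(F) \subset K(F)$ (which I read as $J(X) \subset K(X)$) to build a sequence: choose $u_1$ with $Ku_1 = Ju_0$, and inductively $u_{n+1}$ with $Ku_{n+1} = Ju_n$. Writing $y_n = Ku_{n+1} = Ju_n$, I would apply~(\ref{SalIneq}) with $u = u_{n+1}$, $q = u_n$ to get
\[
d(y_{n+1}, y_n) = d(Ju_{n+1}, Ju_n) \le \xi_1 d(y_n, y_{n-1}) + \xi_2 d(y_n, y_{n+1}) + \xi_3 d(y_{n-1}, y_n),
\]
which rearranges to $d(y_{n+1}, y_n) \le \lambda\, d(y_n, y_{n-1})$ with $\lambda = (\xi_1 + \xi_3)/(1 - \xi_2) < 1$. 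An easy induction plus the triangle inequality then shows $\{y_n\}$ is Cauchy.

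Next I would invoke uniform discreteness together with Proposition~\ref{eventuallyConst}: the Cauchy sequence $\{y_n\}$ is eventually constant, say $y_n = z$ for all $n \ge N$. So $Ju_n = Ku_{n+1} = z$ for $n \ge N$. To promote $z$ to a common fixed point, I would first use weak commutativity:
\[
d(J(Ku_{n+1}), K(Ju_{n+1})) \le d(Ju_{n+1}, Ku_{n+1}) = d(z, z) = 0
\]
for $n \ge N$, so $J(Ku_{n+1}) = K(Ju_{n+1})$, i.e. $Jz = Kz$ (using $Ku_{n+1} = Ju_{n+1} = z$). Then applying~(\ref{SalIneq}) with $u = u_{n+1}$ and $q$ chosen so that $Kq = z$ — for instance noting $Jz = Kz$ lets me compare $Jz$ with $Ju_{n+1} = z$: I would estimate $d(Jz, z) = d(Jz, Ju_{n+1}) \le \xi_1 d(Kz, z) + \xi_2 d(Kz, Jz) + \xi_3 d(z, z)$, and since $Kz = Jz$ this gives $d(Jz, z) \le \xi_1 d(Jz, z)$, forcing $Jz = z$, and hence $Kz = z$ as well.

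Finally, uniqueness: if $w$ is another common fixed point, then $d(z, w) = d(Jz, Jw) \le \xi_1 d(Kz, Jz) + \xi_2 d(Kw, Jw) + \xi_3 d(Kz, Kw) = \xi_3 d(z, w)$, so $d(z, w) = 0$. The main obstacle I anticipate is bookkeeping around the exact reading of~(\ref{SalIneq}) — whose stated indices pair $Ku$ with $Ju$ rather than $Ju$ with $Ku$ symmetrically — and making sure the constant $\lambda$ comes out strictly less than $1$ regardless of how the $\xi_i$ are distributed; a secondary subtlety is justifying each place where I substitute $Kq = z$, since this requires $z$ to lie in the range of $K$, which is guaranteed only along the tail of the sequence, so I must phrase the fixed-point step using iterates $u_{n+1}$ with $n \ge N$ rather than an abstract preimage of $z$. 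Once the contraction constant and the substitutions are pinned down, the rest is routine.
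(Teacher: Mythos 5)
Your proposal is correct and follows essentially the same route as the paper: the Jungck-type iteration $Ku_{n+1}=Ju_n$, the contraction recurrence giving a Cauchy sequence, eventual constancy via Proposition~\ref{eventuallyConst}, weak commutativity to get the coincidence $Jz=Kz$, inequality~(\ref{SalIneq}) to force the fixed point, and the same uniqueness computation. The only differences are cosmetic: you show the limit value $z$ itself is the common fixed point (comparing $Jz$ with a tail iterate), whereas the paper shows $Ja$ is, and in your uniqueness step you permuted the roles of $\xi_1$ and $\xi_3$ relative to~(\ref{SalIneq}) --- harmless, since each $\xi_i<1$.
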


\begin{proof}
    We use ideas of~\cite{Saluja}.

    Let $u_0 \in F$. Since $J(F) \subset K(F)$,
    there exists a sequence $\{u_n\}$ such that
\begin{equation}
\label{salujaThm3Induction}
     Ku_n = Ju_{n-1}.
\end{equation}
Then
\[ d(Ku_{n+1}, Ku_n) = d(Ju_n,Ju_{n-1}) \le 
\mbox{ (by (\ref{SalIneq})) }
\]
\[ \xi_1d(Ku_n,Ku_{n-1}) + \xi_2d(Ku_n, Ju_n) +
    \xi_3d(Ku_{n-1},Ju_{n-1}) =
    \] 
\[ \xi_1d(Ku_n,Ku_{n-1}) + \xi_2d(Ku_n, Ku_{n+1}) +
    \xi_3d(Ku_{n-1},Ku_n) =
    \]
    \[
(\xi_1 + \xi_3) d(Ku_n,Ku_{n-1}) + \xi_2 d(Ku_n,Ku_{n+1})
\]
or
\[ d(Ku_{n+1}, Ku_n) \le
   \frac{\xi_1 + \xi_3}{1 - \xi_2} d(Ku_n,Ku_{n-1}), 
   \mbox{ i.e., }
\]
\[ d(Ku_{n+1}, Ku_n) \le cd(Ku_n,Ku_{n-1}), 
\]
where $c = \frac{\xi_1 + \xi_3}{1 - \xi_2}$. 
From~(\ref{SalIneq}), we have $0 < c < 1$.
Therefore, $\{Ku_n\}$ is a Cauchy sequence. It follows from
Proposition~\ref{eventuallyConst} that for some $a \in X$,
$Ku_n = a$ for almost all~$n$. From~(\ref{salujaThm3Induction})
we have 
\begin{equation}
    \label{JKcoincidence}
    Ju_n = Ku_n = a \mbox{ for almost all } n.
\end{equation}

By (\ref{JKcoincidence}) and~(\ref{weaklyCommutIneq}), 
for almost all~$n$,
\[ d(Ja,Ka) = d(JKu_n,KJu_n) \le
    d(Ju_n, Ku_n) = d(a,a) = 0\]
so
\begin{equation}
\label{aIsCoincidencePt}
    Ja = Ka
\end{equation}
Therefore, (\ref{weaklyCommutIneq}) and~(\ref{aIsCoincidencePt}) imply
\[ d(KJa,JJa) = d(KJa,JKa) \le d(Ka,Ja) = 0
\]
so
\begin{equation}
\label{KJa=JJa}
    KJa = JJa
\end{equation}

Thus
\[ d(Ja,JJa) \le \xi_1 d(Ka,KJa) + \xi_2 d(Ka,Ja) +
   \xi_3 d(KJa,JJa)
\]
\[ = \mbox{ (by (\ref{aIsCoincidencePt}) and (\ref{KJa=JJa})) } ~~\xi_1 d(Ja,JJa) + \xi_2 \cdot 0 + \xi_3 d(JJa,JJa) 
\]
\[ = \xi_1 d(Ja,JKa) + \xi_3 \cdot 0 = \xi_1 d(Ja,JJa).
\]
Thus 
$d(Ja,JJa) = 0$. Thus $Ja$ is a fixed point of~$J$.
By~(\ref{KJa=JJa}), $Ja$ is a common fixed point
of~$J$ and~$K$.

To show the uniqueness of the common fixed point, 
suppose $x_i = Jx_i = Kx_i$, $i \in \{1,2\}$. Then
\[ d(x_1,x_2) = d(Jx_1,Jx_2) \le
   \xi_1 d(Kx_1,Kx_2) + \xi_2 d(Kx_1,Jx_1) +
   \xi_3 d(Kx_2,Jx_2)
\]
\[ = \xi_1 d(x_1,x_2) + 0 + 0
\]
which implies $d(x_1,x_2)=0$, i.e., $x_1 = x_2$.
\end{proof}

\section{\cite{SalJhade22}'s weakly compatible mappings}
The paper~\cite{SalJhade22} claims fixed point
 results for weakly compatible mappings on digital
images. However, its
errors cast doubt on
all of its assertions
that are presented as
original. It also has
several inappropriate
citations.

\subsection{Citation flaws}
Several definitions inappropriately attributed 
in~\cite{SalJhade22}
to~\cite{EgeKaraca-Ban}
should be attributed as
follows.
\[
\begin{array}{ll}
\underline{\mbox{Definition}} & \underline{\mbox{Should be attributed to}} \\
\mbox{2.4 (connected)} &
\cite{Rosenfeld} \\
\mbox{2.5(i), (ii) (continuous)~~~} &
\cite{Rosenfeld,Bx99} \\
\mbox{2.5(iii) (isomorphism)} &
\mbox{\cite{Bx94} (there called ``homeomorphism")} \\
\mbox{2.6 (path)}
& \cite{Rosenfeld}
\end{array}
\]

\subsection{\cite{SalJhade22}'s Proposition~2.15}
\cite{SalJhade22} states
(paraphrased) the following 
as its flawed
Definition~2.14.

\begin{definition}
    \label{weaklyCompat}
    {\rm \cite{Dalal17}}
    Two self mappings $S$ and $T$
    of a digital metric space
    $(X,d,\rho)$ are called 
    {\em weakly compatible} if they
commute at coincidence points; i.e,
if $Sx = Tx$ for all $x \in X$ 
then $STx = TSx$ for all $x \in X$.
\end{definition}

The unfortunate uses of 
``for all $x \in X$" 
imply that $S=T$ - likely not the authors' intention. 
It seems likely
that a better wording would be 
\begin{quote}
  ``... i.e., for all $x \in X$, if $Sx = Tx$  
then $STx = TSx$."  
\end{quote}

At any rate, the following is
stated as Proposition~2.15
of~\cite{SalJhade22}.

\begin{assert}
    \label{SalJhade22-2.15}
    Let $J,K: F \to F$ be two
    weakly compatible maps
on a digital metric space. Suppose~$\eta$
is a unique point of coincidence 
of~$J$ and~$K$, i.e.,
\begin{equation}
\label{weaklyCompatEq}
J(\sigma)=K(\sigma)=\eta \mbox{~~~}
[\mbox{sic}]
\end{equation}
Then $\eta$ is the unique common 
fixed point of~$J$ and~$K$.
\end{assert}

It seems likely that~(\ref{weaklyCompatEq}) is
intended to be
\[ J(\eta)=K(\eta)= \sigma
\]

Regardless, the argument offered
as proof of 
Assertion~\ref{SalJhade22-2.15}
in~\cite{SalJhade22} claims
that~(\ref{weaklyCompatEq}) implies
\[ J(\sigma) = JK(\sigma) =
KJ(\sigma) = K(\sigma). \]
The middle equation
of this chain is due
to the weakly 
compatible assumption.
However, there is no obvious reason
to believe that either the first or
the third equation in this chain
is correct. Therefore,
Assertion~\ref{SalJhade22-2.15}
must be considered unproven.

We note the same
faulty proposition and ``proof"
appear as Proposition~2.15
of~\cite{SalJhade23}, a paper whose
blemishes are discussed
in~\cite{BxBad7}.

\subsection{\cite{SalJhade22}'s
Theorem 3.1}
Stated as Theorem~3.1
of~\cite{SalJhade22}
is the following.
\begin{thm}
    \label{SalJhade3.1}
    Let $(F,\Phi,\Upsilon)$
    be a complete digital
    metric space, where
    $\Phi$ is the 
    Euclidean metric. Let
    $J,K,L,M: F \to F$,
    where $J(F) \subset M(F)$, 
    $K(F) \subset L(F)$, and for some $\xi$
    such that $0 < \xi < 1$,
    \[ \Phi(Ju,Kq) \le
       \xi \Phi(Lu,Mq) 
       \mbox{ for all } u,q \in F.
    \]
 If $L$ and $M$ are continuous mappings 
 and $\{J,L\}$ and $\{K, M\}$ are pairs
of commutative mappings then there exists a unique common fixed point
in $F$ for all four mappings $J, K, L, M$.
\end{thm}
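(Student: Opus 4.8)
The plan is to run the standard Jungck four-map iteration, but to exploit the fact that a digital metric space under the Euclidean (indeed any $\ell_p$) metric is uniformly discrete by Remark~\ref{unifDiscrete}, so that Proposition~\ref{eventuallyConst} applies. This lets the argument proceed purely with equalities and, as a bonus, lets us drop the continuity hypotheses on $L$ and $M$ entirely; I would therefore actually state and prove the stronger version in which ``complete digital metric space with the Euclidean metric'' is weakened to ``uniformly discrete metric space'' and the continuity assumptions are deleted, in the same spirit as Theorems~\ref{ParkEtAl3.1Corrected} and~\ref{improvedSaluja}.

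First I would construct the orbit. Fix $x_0 \in F$; using $J(F) \subset M(F)$ pick $x_{2n+1}$ with $Mx_{2n+1} = Jx_{2n}$, and using $K(F) \subset L(F)$ pick $x_{2n+2}$ with $Lx_{2n+2} = Kx_{2n+1}$. Put $y_{2n} = Jx_{2n} = Mx_{2n+1}$ and $y_{2n+1} = Kx_{2n+1} = Lx_{2n+2}$. Applying $\Phi(Ju,Kq) \le \xi\Phi(Lu,Mq)$ at $(u,q)=(x_{2n},x_{2n+1})$ and at $(u,q)=(x_{2n+2},x_{2n+1})$, together with symmetry of $\Phi$, gives $\Phi(y_{n},y_{n+1}) \le \xi\,\Phi(y_{n-1},y_{n})$, hence $\Phi(y_n,y_{n+1}) \le \xi^n\Phi(y_0,y_1)$ by induction, and the usual triangle-inequality estimate shows $\{y_n\}$ is Cauchy. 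Since $\Phi$ is uniformly discrete, Proposition~\ref{eventuallyConst} yields a point $z$ and an index $N$ with $y_n = z$ for all $n \ge N$; thus $Jx_{2n} = Mx_{2n+1} = Kx_{2n+1} = Lx_{2n+2} = z$ for all large $n$.

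Next I would extract coincidence points and invoke commutativity. For large $n$ the point $v = x_{2n+1}$ satisfies $Kv = Mv = z$, so $\{K,M\}$ commuting gives $Kz = KMv = MKv = Mz$; similarly $w = x_{2n+2}$ satisfies $Jw = Lw = z$, so $\{J,L\}$ commuting gives $Jz = Lz$. Writing $\alpha = Jz = Lz$ and $\beta = Kz = Mz$, the contractive inequality at $(u,q)=(z,z)$ reads $\Phi(\alpha,\beta) = \Phi(Jz,Kz) \le \xi\Phi(Lz,Mz) = \xi\Phi(\alpha,\beta)$, forcing $\alpha=\beta$; and the contractive inequality at $(u,q)=(z,x_{2n+1})$ for large $n$ reads $\Phi(\alpha,z) = \Phi(Jz,Kx_{2n+1}) \le \xi\Phi(Lz,Mx_{2n+1}) = \xi\Phi(\alpha,z)$, forcing $\alpha=z$. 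Hence $Jz=Kz=Lz=Mz=z$. Uniqueness is immediate: a second common fixed point $z'$ would satisfy $\Phi(z,z') = \Phi(Jz,Kz') \le \xi\Phi(Lz,Mz') = \xi\Phi(z,z')$, so $z=z'$.

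I expect the only genuinely delicate point — and, presumably, where the original proof in~\cite{SalJhade22} goes wrong — to be the step from ``Cauchy'' to ``common fixed point'': in an arbitrary complete metric space one is obliged to pass to limits using the continuity of $L$ and $M$ plus a compatibility condition, whereas here the \emph{correct} observation is that uniform discreteness makes $\{y_n\}$ literally constant past some index, after which commutativity at the resulting coincidence points and two applications of the contractive inequality finish the proof with no continuity needed at all. Everything else is routine, so once the iteration and the ``eventually constant'' reduction are in place the bookkeeping should be short.
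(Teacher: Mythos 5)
Your proposal is correct and takes essentially the same route as the paper, which likewise discards the completeness and continuity hypotheses and proves the strengthened Theorem~\ref{improvedSalJhade3.1} for a uniformly discrete metric via the same Jungck-type iteration, eventual constancy of the Cauchy sequence (Remark~\ref{unifDiscrete} and Proposition~\ref{eventuallyConst}), commutativity applied at the coincidence points, and the same uniqueness argument. The only difference is minor bookkeeping in how the final applications of the contractive inequality are arranged.
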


The argument offered as
proof of this assertion
is largely correct, although
marked by fixable typos and
some steps that should have
been better explained. Further,
its hypotheses of completeness and 
continuity are
unnecessary, and the
choice of $\Phi$ can be
generalized. Thus, we have the
following.

\begin{thm}
    \label{improvedSalJhade3.1}
    Let $(F,\Phi,\Upsilon)$
    be a digital
    metric space, where
    $\Phi$ is any uniformly discrete 
    metric. Let
    $J,K,L,M: F \to F$,
    where $J(F) \subset M(F)$, 
    $K(F) \subset L(F)$, and for some $\xi$
    such that $0 < \xi < 1$,
    \begin{equation}
    \label{SJ3.1IneqForImproved}
 \Phi(Ju,Kq) \le
       \xi \Phi(Lu,Mq)
       ~\mbox{ for all } u,q \in F.
    \end{equation}
 If $\{J,L\}$ and $\{K, M\}$ are pairs
of commutative mappings then there exists a unique common fixed point
in $F$ for all four mappings $J, K, L, M$.
\end{thm}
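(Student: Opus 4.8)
The plan is to run the standard Jungck-type iteration, but to replace the usual ``completeness plus continuity'' endgame with the uniform-discreteness shortcut provided by Proposition~\ref{eventuallyConst}. First I would fix any $x_0 \in F$ and use the inclusions $J(F) \subset M(F)$ and $K(F) \subset L(F)$ to build inductively a sequence $\{x_n\}$ together with an auxiliary sequence $\{y_n\}$ satisfying $y_{2n} = Jx_{2n} = Mx_{2n+1}$ and $y_{2n+1} = Kx_{2n+1} = Lx_{2n+2}$. Applying~(\ref{SJ3.1IneqForImproved}) with the appropriate arguments gives $\Phi(y_n, y_{n+1}) \le \xi\,\Phi(y_{n-1}, y_n)$ for every $n$, hence $\Phi(y_n, y_{n+1}) \le \xi^n \Phi(y_0, y_1)$; a telescoping estimate via the triangle inequality then shows $\{y_n\}$ is Cauchy.

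Next, since $\Phi$ is uniformly discrete, Proposition~\ref{eventuallyConst} tells us $\{y_n\}$ is eventually constant: there are $m \in \N$ and $z \in F$ with $y_n = z$ for all $n \ge m$. This is precisely where the argument diverges from~\cite{SalJhade22}, which invoked completeness to produce a limit and then used continuity of $L$ and $M$ to identify it; here the stabilization is automatic and no continuity hypothesis is needed. Concretely, for all sufficiently large $n$ we have $Jx_{2n} = Mx_{2n+1} = Kx_{2n+1} = Lx_{2n+2} = z$.

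Then I would extract the fixed-point identities in two moves. Commutativity of $\{J,L\}$ applied to $x_{2n+2}$ (for large $n$) gives $Jz = J(Lx_{2n+2}) = L(Jx_{2n+2}) = Lz$, and commutativity of $\{K,M\}$ applied to $x_{2n+1}$ gives $Kz = K(Mx_{2n+1}) = M(Kx_{2n+1}) = Mz$. Feeding $u = z$, $q = x_{2n+1}$ into~(\ref{SJ3.1IneqForImproved}) yields $\Phi(Jz, z) \le \xi\,\Phi(Lz, z) = \xi\,\Phi(Jz, z)$, which forces $Jz = z$ (and hence $Lz = z$) because $\xi < 1$; symmetrically, $u = x_{2n}$, $q = z$ gives $Kz = z = Mz$. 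Thus $z$ is a common fixed point of $J,K,L,M$. Uniqueness is immediate: if $z$ and $z'$ are both common fixed points, then $\Phi(z,z') = \Phi(Jz, Kz') \le \xi\,\Phi(Lz, Mz') = \xi\,\Phi(z,z')$, so $z = z'$.

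I expect the main obstacle to be purely clerical rather than conceptual: keeping the parities straight so that the common value $z$ really does equal all four of $Jx_{2n}$, $Mx_{2n+1}$, $Kx_{2n+1}$, $Lx_{2n+2}$ simultaneously for a single large index, and ensuring that each use of commutativity occurs at a point where the relevant inner map has already been shown to take the value $z$. Once Proposition~\ref{eventuallyConst} is in hand, there is no genuine analytic difficulty.
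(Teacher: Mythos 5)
Your proposal is correct and follows essentially the same route as the paper's proof: the same Jungck-type iteration built from the inclusions $J(F)\subset M(F)$, $K(F)\subset L(F)$, the same geometric estimate feeding Proposition~\ref{eventuallyConst} to get eventual constancy in place of completeness and continuity, the same commutativity identities $Jz=Lz$, $Kz=Mz$, and the same uniqueness argument. Your way of closing the fixed-point step (plugging $u=z$, $q=x_{2n+1}$ and $u=x_{2n}$, $q=z$ into~(\ref{SJ3.1IneqForImproved})) is in fact slightly cleaner than the corresponding display in the paper, but it is the same idea.
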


\begin{proof}
    We use ideas of~\cite{SalJhade22}.

    Let $u_0 \in F$.
    Since $J(F) \subset M(F)$, there exists
    $u_1 \in F$ such that
    $Ju_0 = Mu_1$, and
    since $K(F) \subset L(F)$, there exists
    $u_2 \in F$ such that $Lu_2 = Ku_1$.
    Inductively, we
    construct sequences
    $\{u_n\} \subset F$
    and $\{q_n\} \subset F$ such
    that for all~$n \ge 0$,
    \[ q_{2n} = Ju_{2n} = Mu_{2n+1},
    \]
    \[ q_{2n+1} =Ku_{2n+1}= Lu_{2n+2}.
    \]

    By~(\ref{SJ3.1IneqForImproved}),
    \[ \Phi(q_{2n},q_{2n+1}) = \Phi(Ju_{2n},Ku_{2n+1}) \le 
    \xi \Phi(Lu_{2n},Mu_{2n+1}) =
    \xi \Phi(q_{2n-1},q_{2n})    
    \]
    Similarly,
    \[ \Phi(q_{2n+1},q_{2n+2}) \le \xi \Phi(q_{2n},q_{2n+1})
    \]
    Hence for all~$n$,
    \[ \Phi(q_{n+1},q_{n+2}) \le \xi \Phi(q_{n},q_{n+1}) \le \xi^{n+1} \Phi(q_0,q_1)
    \to_{n \to \infty} 0.
    \]
    By Remark~\ref{unifDiscrete} and
    Proposition~\ref{eventuallyConst}, there
    exists $\sigma \in F$ such that for
    almost all~$n$, 
    \begin{equation}
        \label{seqsConverge}
        \sigma = q_n = 
        Ju_{2n} = Ku_{2n+1} = 
        Lu_{2n} = Mu_{2n+1}.
    \end{equation}
    We also have, for almost all~$n$,
    using the commutativity of~$J$ with~$L$,
    \begin{equation}
        \label{Jsig=Lsig}
        J \sigma = JLu_{2n} = LJu_{2n} = L \sigma
    \end{equation}
    Similarly, for almost all $n$,
    \begin{equation}
        \label{Ksig=Msig}
        K\sigma = KMu_{2n+1} =
        MKu_{2n+1} =
        M\sigma
    \end{equation}

By~(\ref{Ksig=Msig}), 
(\ref{seqsConverge}), (\ref{SJ3.1IneqForImproved}),
\[ \Phi(\sigma,M\sigma)
  = \Phi(\sigma,K\sigma)=\Phi(Ju_{2n},Ku_{2n+1}) \le 
  \]
\[  \xi \Phi(Lu_{2n},Mu_{2n+1}) = \xi \Phi(\sigma,\sigma)=0
\]
which implies $\Phi(\sigma,M\sigma) = 0$, i.e., $\sigma = M\sigma$. With~(\ref{Ksig=Msig}), we
have that $\sigma$ is a
common fixed point of~$K$
and~$M$.

(\ref{Jsig=Lsig}), 
(\ref{seqsConverge}), and
(\ref{SJ3.1IneqForImproved}) also give us
\[ \Phi(L\sigma,\sigma)=
\Phi(J\sigma, \sigma) = \Phi(J\sigma,Ku_{2n+1}) \le \xi \Phi(L\sigma,Mu_{2n+1}) =
\xi \Phi(L\sigma, \sigma)
\]
Thus, $\Phi(L\sigma, \sigma)=0$, so
$L\sigma = \sigma$. With~(\ref{Jsig=Lsig}), we
have that $\sigma$
is a common fixed point
of~$J$ and~$L$, and therefore, of
the four maps $J,K,L,M$.

Uniqueness is shown as
follows. Suppose
$\sigma_1$ is a common
fixed point of $J,K,L,M$.
Then
\[ \Phi(\sigma,\sigma_1)
= \Phi(J\sigma,K\sigma_1)
\le \xi \Phi(L\sigma,M\sigma_1)
= \xi \Phi(\sigma,\sigma_1)
\]
so $\Phi(\sigma,\sigma_1) = 0$, i.e., $\sigma = \sigma_1$.
\end{proof}

We note an important case or modification
in which Theorems~\ref{SalJhade3.1} and~\ref{improvedSalJhade3.1}
reduce to triviality. Notice:
\begin{itemize}
    \item We consider the
    case $\Upsilon=c_1$,
    $L=M$ is $c_1$-continuous, and $F$
    is $c_1$-connected.
    \item We do not require the containment and the
    commutativity hypotheses of
    Theorems~\ref{SalJhade3.1} and~\ref{improvedSalJhade3.1}.
\end{itemize}

\begin{prop}
        Let $(F,\Phi,c_1)$
    be a digital
    metric space, where
    $\Phi$ is an $\ell_p$ metric or
    the shortest path metric. Let
    $J,K,L: F \to F$,
    where for some $\xi$
    such that $0 < \xi < 1$,
\begin{equation}
\label{L=M}
     \Phi(Ju,Kq) \le
      \xi \Phi(Lu,Lq)
      \mbox{ for all } u,q \in F
\end{equation} 
holds. Then 
\begin{itemize}
    \item $J=K$ and
    \item if $(F,c_1)$ is connected,
and $L$ is $c_1$-continuous, then $J$
    is a constant function.
\end{itemize}  
\end{prop}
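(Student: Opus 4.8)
The plan is to exploit the fact that the inequality~(\ref{L=M}) is hypothesized for \emph{all} $u,q\in F$, including the diagonal case $u=q$, together with uniform discreteness. First I would set $u=q$ in~(\ref{L=M}) to obtain $\Phi(Ju,Ku)\le\xi\,\Phi(Lu,Lu)=0$ for every $u\in F$, which forces $Ju=Ku$ for all $u$, i.e.\ $J=K$. This is the first bullet, and it is immediate—no finiteness or connectedness is needed here, only that $\Phi$ is a metric (so $\Phi(Lu,Lu)=0$).

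For the second bullet, assume $(F,c_1)$ is connected and $L\in C(F,c_1)$. Now substitute $K=J$ back into~(\ref{L=M}): for all $u,q\in F$, $\Phi(Ju,Jq)\le\xi\,\Phi(Lu,Lq)$. The idea is the same as in the commented-out Theorem~\ref{d-under-1-const}: take $u\adj_{c_1}q$. Because $L$ is $c_1$-continuous, $Lu\adjeq_{c_1}Lq$, and since $\Phi$ is an $\ell_p$ metric or the shortest path metric, adjacent (or equal) points are at distance at most $1$, so $\Phi(Lu,Lq)\le 1$. Hence $\Phi(Ju,Jq)\le\xi<1$. By Remark~\ref{unifDiscrete}, $(F,\Phi)$ is uniformly discrete; more concretely, under either choice of $\Phi$ the distance between distinct points of $F$ is at least $1$ (for the shortest path metric this is clear; for an $\ell_p$ metric, distinct integer points differ by at least $1$ in some coordinate). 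Therefore $\Phi(Ju,Jq)<1$ forces $Ju=Jq$.

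Finally I would propagate this along paths: given arbitrary $a,b\in F$, connectedness gives a $c_1$-path $a=x_0\adj x_1\adj\cdots\adj x_n=b$, and the previous step yields $Jx_i=Jx_{i+1}$ for each $i$, so $Ja=Jb$. Thus $J$ is constant.

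I do not expect a serious obstacle. The only point requiring a word of care is the claim that distinct points of $F$ are at distance at least $1$ in the chosen metric, which is exactly what underlies the uniform discreteness in Remark~\ref{unifDiscrete}; for an $\ell_p$ metric this is because $F\subset\Z^n$, and for the shortest path metric it is because a nontrivial path has length $\ge 1$. A secondary bookkeeping point is that $c_1$-continuity of $L$ only gives $Lu\adjeq_{c_1}Lq$, hence we must allow the case $Lu=Lq$, where $\Phi(Lu,Lq)=0\le 1$ trivially—so the bound $\Phi(Lu,Lq)\le 1$ holds uniformly for $c_1$-adjacent $u,q$. Everything else is routine.
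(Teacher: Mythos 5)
Your proposal is correct and follows essentially the same route as the paper: take $q=u$ to get $J=K$, then use $c_1$-continuity of $L$ to bound $\Phi(Lu,Lq)\le 1$ for adjacent $u,q$, conclude $\Phi(Ju,Jq)\le\xi<1$ hence $Ju=Jq$, and propagate along paths via connectedness. Your added remarks on why distinct points are at distance at least $1$ merely make explicit what the paper leaves implicit.
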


\begin{proof}
When we take $q=u$,~(\ref{L=M}) becomes
\[ \Phi(Ju,Ku) \le \xi \Phi(Lu,Lu) = 0,
\]
so $\Phi(Ju,Ku)=0$,
i.e., $Ju=Ku$. Thus $J=K$.

If $L$ is $c_1$-continuous, then
for $x \adj_{c_1} y$, $Lx \adjeq_{c_1} Ly$,
so
\[ \Phi(Jx,Jy) = \Phi(Jx,Ky)
\le
\xi \Phi(Lx,Ly) \le \xi
< 1
\]
so $Jx=Jy$. If
$F$ is $c_1$-connected,
we have that $J=K$ is a
constant function.
\end{proof}

\subsection{\cite{SalJhade22}'s
Example 3.2}
Example~3.2 of~\cite{SalJhade22}
makes a unique common fixed point
claim for the four functions
\[ J(u)=K(u)=\frac{1}{2},~~~~L(u)= \frac{u+1}{3},~~~~
M(u)=\frac{2u+1}{4}
\]
on the basis of a
previous claim about
digital functions.
But these are not digital
functions, since none of them is
integer-valued.

\subsection{\cite{SalJhade22}'s Theorems 3.4 and 3.5}
Since Assertion~\ref{SalJhade22-2.15}, shown above to be unproven,
is called upon in the arguments
given as proof for the assertions
stated as Theorem~3.4 and 
Theorem~3.5 of~\cite{SalJhade22},
these assertions must be considered
unproven. 

``Theorem" 3.4 of~\cite{SalJhade22} is 
paraphrased as follows.

\begin{assert}
    \label{SJ3.4}
    Let $(X,d,\kappa)$ be a complete digital
    metric space, where~$d$ is the Euclidean
    metric. Let $J,K: X \to X$ such that
    $J(X) \subset K(X)$. Suppose for some
    $\xi$, $0 < \xi < 1$, and all $u,q \in X$,
    \begin{equation} 
    \label{SJ3.4ineq}
    d(Ju,Jq) \le \xi d(Ku, Kq).
    \end{equation}
    Suppose $K(X)$ is complete and $\{J,K\}$
    is a pair of weakly compatible mappings.
    Then there exists a unique common fixed 
    point of~$J$ and~$K$.
\end{assert}

``Theorem" 3.5 of~\cite{SalJhade22} is 
paraphrased as follows.

\begin{assert}
    \label{SJ3.5triv}
    Let $(X,d,\kappa)$ be a complete digital
    metric space, where~$d$ is the Euclidean
    metric. Let $J,K: X \to X$ such that
    $J(X) \subset K(X)$. Suppose for some
    $\xi$, $0 < \xi < 1/2$, and all $u,q \in X$,
    \begin{equation} 
    \label{SJ3.5ineq}
    d(Ju,Jq) \le \xi [d(Ju, Ku) + d(Jq,Kq)].
    \end{equation}
    Suppose $K(X)$ is complete and $\{J,K\}$
    is a pair of weakly compatible mappings.
    Then there exists a unique common fixed 
    point of~$J$ and~$K$.
\end{assert}

\section{\cite{ShaheenEtAl}'s 
iterative algorithms}
The paper~\cite{ShaheenEtAl}
attempts to study iterative
algorithms for fixed points
in digital metric spaces. It is
marred by improper citations,
undefined terms and symbols,
and logical shortcomings.

\subsection{Improper citations}
Definition~1 of~\cite{ShaheenEtAl}
defines what we have called
$c_u$-adjacency for a digital
image $X \subset \Z^n$, attributing the notion 
to~\cite{EgeKaraca15}.
This notion appeared under
different names in~\cite{HanNon,BxHtpyProps}.

Other bad citations of~\cite{ShaheenEtAl}:
\[
\begin{array}{lll}
\mbox{\underline{Item}} &
\mbox{\underline{\cite{ShaheenEtAl} attribution}} & \mbox{\underline{better attribution}} \\
\mbox{Def. 2 - digital image}
& \mbox{\cite{EgeKaraca15}} &
\mbox{\cite{KongRos}} \\
\mbox{Def. 3 - neighbor}
& \mbox{\cite{EgeKaraca15}} &
\mbox{\cite{KongRos}} \\
\mbox{Def. 5 - digital interval} &
\mbox{\cite{EgeKaraca15}} &
\mbox{\cite{Bx94}} \\
\mbox{digitally connected} &
\mbox{\cite{EgeKaraca15}} &
\mbox{\cite{KongRos}} \\
\mbox{Def. 7 - digital metric space} &\mbox{\cite{SrideviEtAl}} &
\mbox{\cite{EgeKaraca-Ban}} 
\end{array}
\]

Also, ``Proposition"~1 
of~\cite{ShaheenEtAl}
(correctly attributed
to~\cite{EgeKaraca-Ban}), claiming digital 
contraction maps are digitally continuous, 
is (as noted in our Example~\ref{counterHan5.1})
incorrect.

\subsection{Undefined terms and
symbols}
There are several terms and
symbols in~\cite{ShaheenEtAl}
that are undefined and for
which no reference is given.

``Definition" 10 is stated as
follows.
\begin{quote}
    {\em Let $(D,g)$ be a complete metric space, $F: D \to D$
    be a self mapping, and
    $t_{n+1} = f(F,t_n)$
    be a} [sic] {\em iterative procedure. Suppose that
    $\Fix(F) \neq \emptyset$,
    the set of fixed point} 
    [sic], {\em and
sequence $t_n$ converges to
$\ell \in \Fix(F)$.}
\end{quote}

Notice that nothing is defined
by this ``definition". Also, 
the notion of {\em iterative
procedure} has neither been 
defined nor referenced.

In ``Definition"~12, the symbol
$f_{F,\alpha_n}$ is undefined.

In statements~(13) and~(14)
of~\cite{ShaheenEtAl}, the
symbol $\Tilde{\mu}$ appears
without any introduction.
By its usage,~$\Tilde{\mu}$
seems to be a function from
$D^2$ to some subset of~$\R$.

\subsection{\cite{ShaheenEtAl}'s Definition 11}
This definition is stated as
follows.
\begin{quote}
    {\em Let $(D,\Tilde{\mu},\rho)$
    be a digital metric space 
    and $\Diamond: D \times [0,1] \to D$ be a mapping such that
    \begin{equation} 
    \label{diamondLinearity}
    \Diamond(\ell, \eta + \mu) = \Diamond(\ell, \eta) 
    + \Diamond(\ell, \mu) \mbox{ and } \Diamond(\ell, 1) = \ell 
    \end{equation}
    We say that a digital metric space have} [sic]
    linear digital structure
    {\em if for all $\ell,m,h,t \in D$ and $\eta,\mu \in [0,1]$,
    if} [sic] {\em it satisfies}
    \begin{equation}
    \label{DiamondIneq}
    \Tilde{\mu}(\Diamond(\ell,\eta) +
    \Diamond(m,\mu),
    \Diamond(h,\eta) +
    \Diamond(t,\mu)) \le
    \eta \Tilde{\mu}(\ell,h) +
      \mu \Tilde{\mu}(m,t).
    \end{equation}
\end{quote}

This ``definition" is flawed
as follows.
\begin{itemize}
\item If we take $\ell = m$
      and $h=t$, the left side
      of~(\ref{DiamondIneq})
      becomes
\[ \Tilde{\mu}(\Diamond(\ell,\eta) +
    \Diamond(\ell,\mu),
    \Diamond(t,\eta) +
    \Diamond(t,\mu)) =
    \Tilde{\mu}(\Diamond(\ell,\eta + \mu), \Diamond(t,\eta + \mu)).
\]
By the hypothesis that the
second parameter of~$\Diamond$
is in~$[0,1]$, we need
another assumption - that
$\eta + \mu \le 1$.
\item Although the second parameter of~$\Diamond$ is required to belong
to~$[0,1]$, the right side of
inequality~(\ref{DiamondIneq})
has terms in which the second
parameters,~$h$ and~$t$, are
assumed in~$D$. 
\item An easy induction argument
based on the first part 
of~(\ref{diamondLinearity})
yields that for $n \in \N$,
\[ n \Diamond(\ell,t) =
   \Diamond(\ell, nt).
\]
If $t > 0$, we can take~$n$
sufficiently large that $nt > 1$, contrary to the requirement
that the second parameter 
of~$\Diamond$ be in~$[0,1]$.
\end{itemize}
We must conclude that
{\em linear digital structure}
is undefined. Since this notion
is woven into ``Theorems" 2, 3,
4, and 5
of~\cite{ShaheenEtAl}, these
assertions are unproven. Thus,
\cite{ShaheenEtAl}
has no assertions
that are all of original, correct,
and correctly proven.

\subsection{\cite{ShaheenEtAl}'s Example 1}
This ``example" assumes
$(D,\Tilde{\mu}, \rho)$ is a
digital metric space,
where $D=N^*$ and
$F: D \to D$ is defined by
\[ F(t) = \frac{t}{2} + 3. 
\]
Note for an odd member of~$D$, $t=2n+1$ for some $n \in \N^*$,
        \[ F(t) = n +3 + 1/2
           \not \in \Z;
        \]
        Thus $F$ is not a 
        valid digital function.

\section{Further remarks}
We paraphrase~\cite{BxBad8}:
\begin{quote}
We have discussed several papers that seek to advance
fixed point assertions for digital metric spaces.
Many of these assertions are incorrect, incorrectly proven, 
or reduce to triviality; consequently, all of these
papers should have been rejected or required to undergo
extensive revisions. This reflects badly not only on
the authors, but also on the referees and editors who
approved their publication.
\end{quote}

\end{document}